\newcommand{\ns}{{\mathbb N}} 
\newcommand{\qs}{{\mathbb Q}}  
\newcommand{\cs}{{\mathbb C}} 
\newcommand{\al}{\alpha}
\newcommand{\si}{\sigma}
\newcommand{\la}{\lambda}
\newcommand{\eps}{\varepsilon}
\newcommand{\bu}{\bar u}
\newcommand{\tam}{Tam}
\newcommand{\Sn}{{\mathfrak S}}
\newcommand{\GK}{\mathbb{K}}
\newcommand{\GL}{\mathbb{L}}
\DeclareMathOperator{\DR}{DR}
\DeclareMathOperator{\id}{id}
\DeclareMathOperator{\Park}{Park}
\DeclareMathOperator{\Tam}{Tam}
\newcommand{\tG}{\tilde G}
\newcommand{\tz}{\tilde z}
\newcommand{\cL}{\mathcal L}
\newcommand{\cT}{\mathcal T}
\newcommand{\ps}{permutations}
\newcommand{\figeps}[3]
{\begin{figure}[ht!]
\begin{center} 
\includegraphics[width=#1cm]{#2.eps}\caption{#3}\label{fig:#2} 
\end{center}
\end{figure}}
\newtheorem{Theorem}{Theorem}
\newtheorem{Proposition}[Theorem]{Proposition}
\newtheorem{Corollary}[Theorem]{Corollary}
\newtheorem{Lemma}[Theorem]{Lemma}
\newtheorem{Definition}[Theorem]{Definition}
\newcommand{\beq}{\begin{equation}}
\newcommand{\eeq}{\end{equation}}
\newcommand{\gf}{generating function}
\newcommand{\gfs}{generating functions}
\newcommand{\fps}{formal power series}
\def\emm#1,{{\em #1}}
\def\section{\@startsection{section}{1}%
 \z@{.7\linespacing\@plus\linespacing}{.5\linespacing}%
 {\normalfont\bfseries\scshape\centering}}
\def\subsection{\@startsection{subsection}{2}%
  \z@{.5\linespacing\@plus\linespacing}{.5\linespacing}%
  {\normalfont\bfseries\scshape}}
\def\subsubsection{\@startsection{subsubsection}{3}%
 \z@{.5\linespacing\@plus\linespacing}{-.5em}
  {\normalfont\bfseries\itshape}}
\def\cT{\mathcal{T}}
\def\cTn{\cT_n}
\newcommand{\spacebreak}
{\begin{displaymath} \triangleleft \; \lhd \;
\diamond \; \rhd \; \triangleright
  \end{displaymath}}
\begin{document}
\title
[An extension of Tamari lattices]
{An extension of Tamari lattices}

\author[L.-F. Préville-Ratelle]{Louis-François Préville-Ratelle}
\author[X. Viennot]{Xavier Viennot}

\address{LFPR: Instituto de Matem\'atica y F\'isica, Universidad de Talca, 2 norte 685, Talca, Chile.}
\email{preville-ratelle@inst-mat.utalca.cl}
\address{XV: CNRS, LABRI, Universit\'e de Bordeaux, Bordeaux, France}
\email{viennot@xavierviennot.org}
%

\thanks{LFPR was supported by a Proyecto Fondecyt Postdoctorado 3140298. }

\keywords{Enumeration --- Lattice paths
  --- Tamari lattices --- Lattices}
\subjclass[2000]{05A15, 05E18, 20C30}

\begin{abstract}

For any finite path $v$ on the square grid consisting of north and east unit steps, starting at (0,0), we construct a poset Tam$(v)$ that consists of all the paths weakly above $v$ with the same number of north and east steps as $v$. For particular choices of $v$, we recover the traditional Tamari lattice and the $m$-Tamari lattice. 

Let $\overleftarrow{v}$ be the path obtained from $v$ by reading the unit steps of $v$ in reverse order, replacing the east steps by north steps and vice versa. We show that the poset Tam$(v)$ is isomorphic to the dual of the poset Tam$(\overleftarrow{v})$. We do so by showing bijectively that the poset Tam$(v)$ is isomorphic to the poset based on rotation of full binary trees with the fixed canopy $v$, from which the duality follows easily. This also shows that Tam$(v)$ is a lattice for any path $v$. We also obtain as a corollary of this bijection that the usual Tamari lattice, based on Dyck paths of height $n$, is a partition of the (smaller) lattices Tam$(v)$, where the $v$ are all the paths on the square grid that consist of $n-1$ unit steps. 

We explain possible connections between the poset Tam$(v)$ and (the combinatorics of) the generalized diagonal coinvariant spaces of the symmetric group.

\end{abstract}

\date{\today}
\maketitle


\section{Introduction}\label{secintroduc}


In this article, we generalize the $m$-Tamari lattice to posets of arbitrary paths, as it is explained in section \ref{sectmainresults}. We prove that these posets are actually lattices, that they satisfy a duality property, and that they partition the ordinary Tamari lattice into intervals. We first introduce some basic definitions in section \ref{secbasicdefs} and some motivations in section \ref{rationalcomb}.

\subsection{Basic definitions}\label{secbasicdefs}

A binary tree is defined recursively as follows. A binary tree $T$ is either the empty set, or else a triple $(L,r,R)$ where $L$ and $R$ are binary trees and $r$ is the root vertex of $T$. The binary trees $L$ and $R$ are called respectively the left and the right subtrees of $r$. The root of $L$ (respectively $R$) is called the left (respectively right) child of $T$. The degree of a vertex is its number of children, which is either 0,1 or 2 for all the vertices of a binary tree. An external vertex is a vertex with degree 0 and an internal vertex is a vertex of degree 1 or 2. An external edge is an edge adjacent to an external vertex. For $s$ a vertex in $T$ with left and right subtrees $L_s$ and $R_s$, the subtree at $s$ in $T$ is the (binary) tree $(L_s$,s,$R_s)$.  The vertices in the subtree at $s$ are the descendants of $s$. A complete binary tree is a binary tree such that all the vertices have degrees 0 or 2. For more on trees and other combinatorial structures see \cite{stanley-vol2}.

\begin{figure}
  \centering
  \includegraphics[width=8cm]{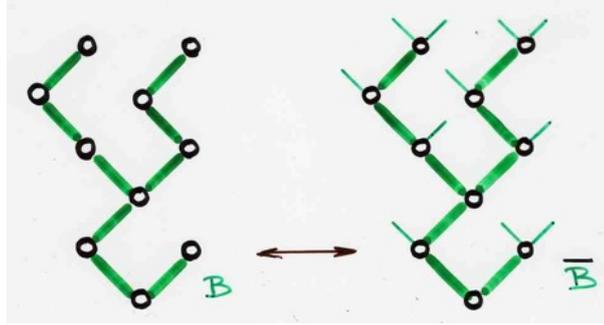}
  \caption{A binary tree B and its associated complete binary tree $\overline{B}$.}
  \label{fig:treeandcompletion}
\end{figure}

It is easy to establish a bijection between binary trees with $n$ vertices and complete binary trees with $n$ internal vertices. It consists of completing a binary tree by adding $n+1$ edges so that all the initial vertices become internal vertices and it becomes a complete binary tree. We denote by $\overline{B}$ the completion of the binary tree $B$. And conversely, if $\overline{B}$ is a complete binary tree, $B$ is obtained from $\overline{B}$ by deleting its external vertices. An example of this notation is given in Figure \ref{fig:treeandcompletion}. 
These two families of trees are enumerated by the well studied Catalan numbers $C_n = \frac{1}{2n+1} \binom{2n+1}{n}$. 

We now define the Tamari lattice. The complete binary trees with $n$ interior vertices can be equipped with a rotation. As in Figure \ref{fig:rotationbinary}, consider a complete binary tree $\overline{T}$ with an internal vertex $s$ such that the left child of $s$, denoted by $t$, is also an internal vertex. Let $A$ be the left subtree of $t$, $B$ the right subtree of $t$ and $C$ the right subtree of $s$. Let $\overline{T'}$ be the complete binary tree constructed from $\overline{T}$ such that $t$ becomes the right child of $s$, $A$ the left subtree of $s$, $B$ the left subtree of $t$ and $C$ the right subtree of $t$. This operation from $\overline{T}$ to $\overline{T'}$ is called a right rotation, and the operation from $\overline{T'}$ to $\overline{T}$ is called a left rotation. In fact, the covering relations of the well known Tamari lattice (see \cite{tam1962,friedman-tamari} ) are the relations $\overline{T}<\overline{T'}$ . 

\begin{figure}
  \centering
  \includegraphics[width=9cm]{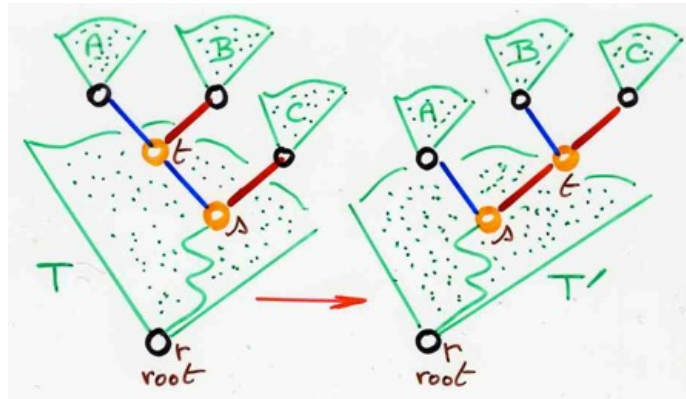}
  \caption{Right rotation on a complete binary tree: the covering relation in the Tamari lattice.}
  \label{fig:rotationbinary}
\end{figure}

\begin{figure}[hb] 
  \centering
  \includegraphics[width=9cm]{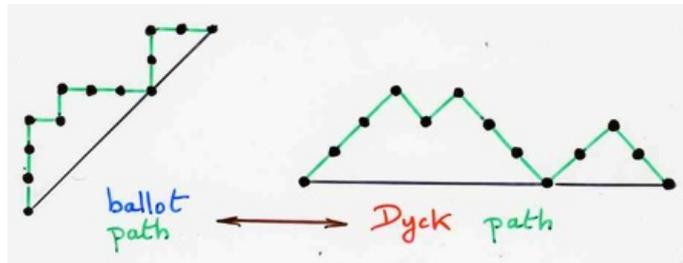}
  \caption{Ballot path and Dyck path.}
  \label{fig:ballotanddyck}
\end{figure}

In this article, we consider a path to be a (finite) walk on the square grid, starting at (0,0), consisting of north and east unit steps denoted by $N$ and $E$ respectively. The set of ballot paths of height $n$ is the set of paths that consist of $n$ north steps, $n$ east steps and weakly above the diagonal, that is, weakly above  the path $(NE)^n$. They are also counted by the Catalan numbers. By applying a clockwise rotation of 45 degrees on ballot paths so that the diagonal becomes horizontal, these ballot paths become the well known Dyck paths (see Figure \ref{fig:ballotanddyck}). The ballot paths can be generalized with a parameter $m$ that is a positive integer. The $m$-ballot paths are the paths that consist of $n$ north steps, $mn$ east steps and weakly above the line $y=\frac{x}{m}$, that is, weakly above the path $(NE^m)^n$. 

Using the bijection between complete binary trees with $n$ internal vertices and ballot paths of height $n$, which is called the post order traversal on edges, the covering relations for the Tamari lattice can be translated into the following procedure on ballot paths. Let $D$ be a ballot path of height $n$. Let $E$ be an east step that precedes a north step in $D$. Draw a diagonal of slope 1 starting at the right extremity of $E$ until it touches $D$ again. Construct $D'$ from $D$ by switching E and the portion of the path above this diagonal. Then the covering relation in the Tamari lattice based on ballot paths becomes $D<D'$ (see Figure \ref{fig:Tamaricoveringm1et2} for such a covering relation).

\begin{figure}
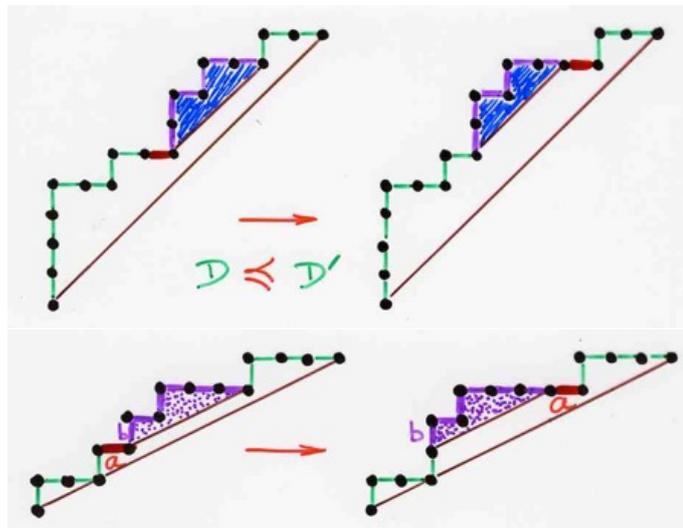

  \centering
  \includegraphics[width=9cm]{Figchap1/Fig4}
  \\ 
  \includegraphics[width=9cm]{Figchap1/Fig4bis}
  \caption{The Tamari covering relation for ballot (Dyck) path (top figure). The covering relation in the $m$-Tamari lattice ($m$=2) (bottom figure).}
  \label{fig:Tamaricoveringm1et2}
\end{figure}

Motivated by the higher diagonal coinvariant spaces of the symmetric group, the covering relation on ballot paths is generalized in \cite{bergeron-preville} to $m$-ballot paths by mimicking the above procedure as follows. Let $D$ be an $m$-ballot path. Let $E$ be an east step that precedes a north step in $D$. Draw a diagonal of slope $\frac{1}{m}$ starting at the right endpoint of $E$ until it touches $D$ again. Construct $D'$ from $D$ by switching E and the portion of the path above this diagonal. Then the covering relation in the $m$-Tamari lattice is given by $D<D'$ (see Figure \ref{fig:Tamaricoveringm1et2} for an example). For more on these lattices and for enumerations of their intervals, we refer the reader to section \ref{seccoinvariantperspec}.


\subsection{Rational Catalan combinatorics $(a,b)$}\label{rationalcomb} 

Let $a$ and $b$ be two relatively prime integers. We consider paths starting at $(0,0)$ on the square grid with north and east steps and strictly above the line $y = \frac{a}{b} x$, excluding the start and end points (see \cite{Biz54}). An example is given in Figure \ref{fig:ab_ballot}. They are called $(a,b)$-ballot paths (or $(a,b)$-Dyck paths), and their study is the subject of very recent work under the term ``rational Catalan combinatorics'' (see \cite{Hikita,GoNe13,ArmstRhoaWilli,GoMaVa14,ArLoWa14} for more on this subject).
The classical ballot paths and their extensions with any integer $m$ are particular cases of such $(a,b)$-ballot paths. The simple Catalan ballot paths are obtained by putting $(a,b)=(n,n+1)$, and their $m$-extensions are obtained by putting $(a,b)=(n,mn+1)$, as shown in Figure \ref{fig:dyck_mdyck}.

\begin{figure}[h]
  \centering
  \includegraphics[width=4cm]{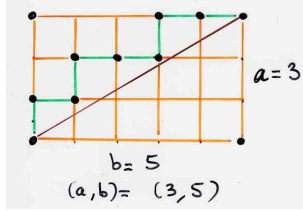}
    \caption{A (3,5)-ballot path.}
  \label{fig:ab_ballot}
\end{figure}

\begin{figure}[h]
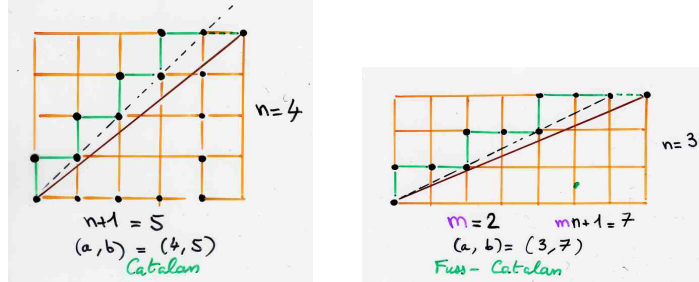

  \centering
  \includegraphics[width=4cm]{tam4_5}\hspace{5.5mm} \includegraphics[width=4.5cm]{tam3_7}
    \caption{A (4,5)-ballot path (left). A (3,7)-ballot path (right).}
  \label{fig:dyck_mdyck}
\end{figure}

An open question is to give an extension of the Tamari lattice, and more generally of the $m$-Tamari lattice to any pair $(a,b)$ of relatively prime integers. We propose an answer to this question, by giving a far more general extension of these Tamari lattices and in particular give a construction of a rational $(a,b)$-Tamari. 


\subsection{Extension: The Tamari lattice \tam$(v)$, where $v$ is an arbitrary path}\label{sectmainresults}
Let $v$ be an arbitrary path, starting at (0,0). Consider all the unitary paths weakly above $v$ that start at (0,0) and finish at the end of $v$. We define the poset \tam$(v)$ on this set of paths with a covering relation. Let $u$ be such a path above $v$. Let $p$ be an integer point on $u$. We define the horizontal distance horiz$_v(p)$ to be the maximum number of east steps that can be added to the right of $p$ without crossing $v$ (an example of these horizontal distances is given in Figure \ref{fig:horizdistance}). 
\begin{figure}  [h]               
  \centering
  \includegraphics[width=5.8cm]{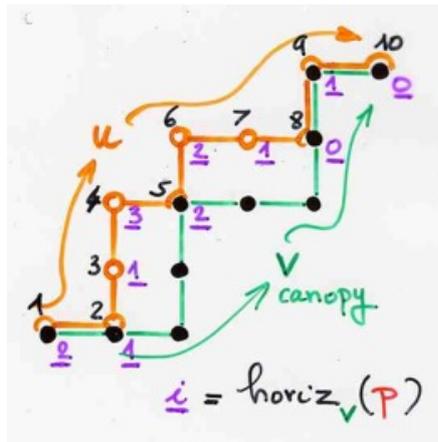}
  \caption{A pair (u,v) of paths with the horizontal distance $horiz_v(p)$.}
  \label{fig:horizdistance}
\end{figure}
Suppose that $p$ is preceded by an east step $E$ and followed by a north step in $u$. Let $p'$ be the first unitary point in $u$ that is after $p$ and such that horiz$_v(p')$ $=$ horiz$_v(p)$. As in Figure \ref{fig:coveringarbitraire}, let $D_{[p,p']}$ be the subpath of $u$ that starts at $p$ and finishes at $p'$. Let $u'$ be obtained from $u$ by switching $E$ and $D_{[p,p']}$. We define the covering relation to be $u <_v u'$ (see Figure \ref{fig:coveringarbitraire} for an example). Then the poset Tam$(v)$ is the transitive closure $<_v$ of this relation. It is easy to see that Tam$((NE^m)^n)$ is the $m$-Tamari lattice.

\begin{figure}    [h]            
  \centering
  \includegraphics[width=5.8cm]{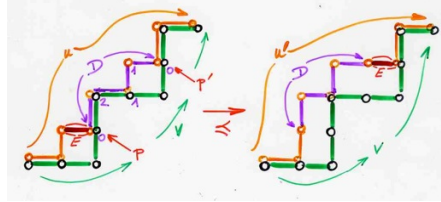}
  \caption{The covering relation defining the poset Tam$(v)$.}
  \label{fig:coveringarbitraire}
\end{figure}

For $v$ an arbitrary path, let $\overleftarrow{v}$ be the path obtained by reading $v$ backward and replacing the east steps by north steps and vice versa. We can now state our main results:

\begin{Theorem}\label{thmlattice}
For any path $v$, Tam$(v)$ is a lattice.
\end{Theorem}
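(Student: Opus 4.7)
The abstract hints at the route I would follow: realize $\tam(v)$ as the poset of complete binary trees with a prescribed canopy, ordered by right rotation. The set of binary trees with a fixed canopy is a classical interval of the ordinary Tamari lattice, so it is itself a lattice, and transporting that lattice structure along a bijection $u\mapsto\varphi(u)$ proves Theorem \ref{thmlattice}. I would define $\varphi$ so that, for each path $u$ weakly above $v$ with the same endpoint as $v$, the internal vertices of $\varphi(u)$ are indexed by a canonical set of lattice points of $u$ (the inner corners, say), and the tree structure is read off from the horizontal-distance profile $p\mapsto \operatorname{horiz}_v(p)$ along $u$. A verification then shows that the canopy of $\varphi(u)$ depends only on $v$, that $\varphi$ is injective, and that $\varphi$ lands surjectively on the trees with canopy $v$; the Fuss--Catalan count in the case $v=(NE^m)^n$ is a useful sanity check but the construction must be uniform in $v$.

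\textbf{Matching covering relations.} To promote $\varphi$ to a poset isomorphism, I would check that a covering $u<_v u'$ becomes a single right rotation in $\varphi(u)$. The point $p$ sits between an east step and a north step of $u$, so $p$ corresponds to an internal vertex $s$ of $\varphi(u)$ whose left child $t$ is again internal. The selection rule ``$p'$ is the first point strictly after $p$ with $\operatorname{horiz}_v(p')=\operatorname{horiz}_v(p)$'' should cut out exactly the end of the left subtree of $t$: the subpath $D_{[p,p']}$ encodes that subtree, and swapping $E$ with $D_{[p,p']}$ is the path avatar of the rotation of Figure \ref{fig:rotationbinary}. Hence $\varphi(u')$ differs from $\varphi(u)$ by one right rotation, and $\varphi$ is an isomorphism of posets. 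As a free byproduct, reversing $v$ reverses the canopy and flips the interval upside-down, giving the duality $\tam(v)\cong\tam(\overleftarrow{v})^{\mathrm{op}}$ promised in the abstract.

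\textbf{Where the difficulty lies.} The delicate step is the precise definition of $\varphi$ and the verification that the horizontal-distance statistic, which is not at first sight a natural tree invariant, tracks the local left-subtree structure closely enough that the ``first return to the same horizontal distance'' rule on paths reproduces the rotation move on trees exactly. Once $\varphi$ is pinned down and its compatibility with covering relations is checked, lattice-ness is inherited without any further poset-theoretic argument, since every interval of the ordinary Tamari lattice is a lattice.
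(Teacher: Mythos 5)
Your proposal follows the same route as the paper: it identifies $\tam(v)$ with the set $I(v)$ of complete binary trees of fixed canopy $v$ via the horizontal-distance statistic (the paper's Lemma \ref{lem:seqhorizarbres} shows $\operatorname{horiz}_v$ along $u$ equals the right-height sequence in post-order traversal, and Proposition \ref{prop_isom_IV_Tv} matches the covering relations exactly as you describe), shows $I(v)$ is an interval of the ordinary Tamari lattice (Proposition \ref{prop_Iv_interval}), and concludes since intervals of lattices are lattices. The sketch is correct and essentially identical to the paper's argument, the only difference being that the paper proves rather than cites the fact that trees with a fixed canopy form a Tamari interval.
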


\begin{Theorem}\label{thmisomdual}
The lattice Tam$(v)$ is isomorphic to the dual of Tam$(\overleftarrow{v})$.
\end{Theorem}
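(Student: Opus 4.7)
Following the route announced in the abstract, my plan is to prove Theorem~\ref{thmisomdual} by constructing a bijection $\Phi_v$ between the underlying set of $\mathrm{Tam}(v)$ and the set $\mathcal{B}(v)$ of complete binary trees whose canopy equals $v$, showing that $\Phi_v$ carries the covering relation $<_v$ to the right-rotation covering relation on $\mathcal{B}(v)$, and then using the left/right reflection of binary trees to produce the duality. The canopy of a complete binary tree is defined as the word in $\{N,E\}$ read off from its external edges from left to right, with the convention that $N$ records a left-child external edge and $E$ records a right-child one. A key observation is that if a tree $T$ has canopy $v$, then the tree $T^{\ast}$ obtained by recursively swapping the left and right subtrees at every vertex has canopy $\overleftarrow{v}$: reflection reverses the left-to-right order of the leaves and exchanges ``left child'' with ``right child''.

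The main work is to build $\Phi_v$ so as to intertwine the two covering relations. I would define $\Phi_v(u)$ recursively by reading $u$ from left to right, using the horizontal distances $\mathrm{horiz}_v(p)$ already featuring in the definition of $<_v$ to decide where to attach each new leaf and hence how to assemble the local subtree. Two properties must be maintained: (i) the canopy of the output tree is exactly $v$, which forces $\Phi_v$ to be well-defined and injective onto $\mathcal{B}(v)$; and (ii) a move ``switch the east step $E$ with the block $D_{[p,p']}$'' on the path side produces exactly one right rotation on the tree side, at the internal vertex that corresponds to the top of the block $D_{[p,p']}$. The subtlety is that the endpoint $p'$ is determined by a non-local condition on $u$ (first later point with the same horizontal distance), whereas a rotation is a strictly local surgery in $T$; the block $D_{[p,p']}$ will need to be identified, through the bijection, with the subtree that gets rotated, and this identification is the main obstacle.

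Once $\Phi_v$ has been shown to be an isomorphism of posets $\mathrm{Tam}(v) \cong (\mathcal{B}(v), \text{right rotation})$, Theorem~\ref{thmisomdual} follows easily: the reflection $T \mapsto T^{\ast}$ is a bijection $\mathcal{B}(v) \to \mathcal{B}(\overleftarrow{v})$ by the canopy remark, and it sends right rotations on trees with canopy $v$ to left rotations on trees with canopy $\overleftarrow{v}$, because the local picture of the move is simply mirrored across the vertical axis. Equivalently, $T^{\ast} < (T')^{\ast}$ in $\mathcal{B}(\overleftarrow{v})$ if and only if $T' < T$ in $\mathcal{B}(v)$. Composing with $\Phi_v$ and $\Phi_{\overleftarrow{v}}^{-1}$ yields an order-reversing bijection $\mathrm{Tam}(v) \to \mathrm{Tam}(\overleftarrow{v})$, which is exactly the content of the theorem. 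Note that this argument also proves Theorem~\ref{thmlattice} as a by-product, since $\mathcal{B}(v)$ is known to be a lattice (it is an interval in the classical Tamari lattice) and $\Phi_v$ transports this structure to $\mathrm{Tam}(v)$.
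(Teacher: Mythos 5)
Your overall architecture is exactly the paper's: transport $\mathrm{Tam}(v)$ to the set $I(v)$ of complete binary trees with (interior) canopy $v$ under rotation, then observe that left/right reflection sends canopy $v$ to canopy $\overleftarrow{v}$ and exchanges right rotations with left rotations, hence is order-reversing. The reflection half of your argument is complete and matches the paper's proof of Theorem~\ref{thmisomdual} essentially verbatim, and your closing remark that the lattice property of $\mathrm{Tam}(v)$ comes for free (since $I(v)$ is an interval of the ordinary Tamari lattice) is also how the paper proves Theorem~\ref{thmlattice}.

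The genuine gap is the step you yourself flag as ``the main obstacle'': showing that the path-side move (swap $E$ with the block $D_{[p,p']}$, where $p'$ is the first later point with $\mathrm{horiz}_v(p')=\mathrm{horiz}_v(p)$) corresponds to exactly one valid right rotation on the tree side. Announcing that the block ``will need to be identified'' with the rotated subtree is a statement of the problem, not a proof, and without it the order-isomorphism $\mathrm{Tam}(v)\cong I(v)$ is unproved. The paper closes this gap with a specific lemma (Lemma~\ref{lem:seqhorizarbres}): under the tree-to-path bijection, the sequence of values $\mathrm{horiz}_v(p)$ read along $u(T)$ from bottom to top equals the sequence of \emph{right heights} of the vertices of $T$ in post-order traversal. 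Granting that, the correspondence of covering relations becomes a local computation (Proposition~\ref{prop_isom_IV_Tv}): a valid right rotation at $(s,t)$ changes the post-order right-height sequence from $(L, S_A, S_B, h_t, S_C, h_s, R)$ to $(L, S_A, S_B, S_C+\bar 1, h_s+1, h_s, R)$ with $h_s=h_t$ and every entry of $S_C$ exceeding $h_s$, which is precisely the effect of moving the east step past the block ending at the first return to the same horizontal distance. You need to supply this lemma (or an equivalent) — it is the only place where the non-local definition of $p'$ is reconciled with the local nature of a rotation, and it is where all the real work of the theorem lives.
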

An example of this duality is given in Figure \ref{fig:duality}.

Recall from the first section that the usual Tamari lattice on complete binary trees with $n$ interior vertices is isomorphic to the lattice Tam$((NE)^n)$.
\begin{Theorem}\label{thmpartitiontamlat}
The usual Tamari lattice Tam$((NE)^n)$ can be partitioned into disjoint intervals ${I}(v)$ indexed by the unitary paths $v$ consisting of a total of $n-1$ east and north steps, i.e. 
$$ Tam((NE)^n) = \bigcup_{|v|=n-1} I(v),$$
where each ${I}(v) \cong {Tam}(v)$. 
\end{Theorem}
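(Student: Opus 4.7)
The plan is to group the complete binary trees in $\Tam((NE)^n)$ by their canopy, and to show that each group is simultaneously a Tamari interval and a copy of $\Tam(v)$.

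First I identify $\Tam((NE)^n)$ with complete binary trees on $n$ internal vertices under rotation, and to each such tree $T$ I attach its canopy $c(T)\in\{N,E\}^{n-1}$: the word recording, from left to right, whether each of the $n-1$ \emph{internal} leaves of $T$ (all leaves except the leftmost and the rightmost) is a left or right child of its parent, with the $\{N,E\}$ convention being the one used in the bijection that proves Theorems \ref{thmlattice} and \ref{thmisomdual}. This immediately yields the set-level partition $\Tam((NE)^n)=\bigcup_{|v|=n-1} C_v$ with $C_v=c^{-1}(v)$.

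The key local computation is the following. With the notation of Figure \ref{fig:rotationbinary}, the rotation $\overline{T}\to\overline{T'}$ leaves the canopy unchanged whenever $B$ is internal, and otherwise modifies the canopy in exactly one letter: if $B$ is a single leaf, then $B$ is the right child of $t$ in $\overline{T}$ and the left child of $t$ in $\overline{T'}$, so the corresponding letter of the canopy flips from ``right child'' to ``left child''. All other leaves keep the same immediate parent-side (leaves strictly inside $A$, $B$ or $C$ are unaffected, and if $A$, resp.\ $C$, is itself a leaf, it remains a left, resp.\ right, child of its new parent). Consequently, along any chain of Tamari covering relations the canopy is componentwise monotone, each letter flipping at most once from the ``right'' value to the ``left'' value.

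This monotonicity forces $C_v$ to be convex in $\Tam((NE)^n)$: if $T,T'\in C_v$ and $T\le T''\le T'$, pick a chain of Tamari covers from $T$ to $T'$ passing through $T''$; the canopy along it is componentwise non-decreasing with equal endpoints $v$, hence constantly equal to $v$, so $T''\in C_v$. Convexity then forces the Tamari order restricted to $C_v$ to coincide with the rotation poset on full binary trees with canopy $v$, which by the bijection proving Theorems \ref{thmlattice} and \ref{thmisomdual} is isomorphic to the lattice $\Tam(v)$. In particular $C_v$ has a unique Tamari-minimum and a unique Tamari-maximum, and convexity upgrades $C_v$ to the Tamari interval $[\min C_v,\max C_v]$; setting $I(v):=C_v$ gives $I(v)\cong \Tam(v)$, and the union over $v$ of length $n-1$ produces the claimed partition. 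The only real hurdle is the canopy-change lemma of the second paragraph; it is a routine local case analysis once the correct $\{L,R\}\leftrightarrow\{N,E\}$ dictionary (the one implicit in the bijection between complete binary trees and ballot paths) is pinned down, after which convexity and the interval identification fall out cleanly.
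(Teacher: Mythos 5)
Your overall strategy is the same as the paper's: partition the complete binary trees by canopy, control how the canopy moves under a rotation (your local computation is exactly Lemma \ref{lem:canopyrotation}), and identify each canopy class with $\Tam(v)$ via the tree--path dictionary (Proposition \ref{prop_isom_IV_Tv}). Your convexity argument is correct and is a clean way to package the consequence of Corollary \ref{corol:tamordercanopyorder}; it also correctly justifies that the induced Tamari order on $C_v$ is the transitive closure of the \emph{valid} (canopy-preserving) rotations.

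The gap is in the step ``In particular $C_v$ has a unique Tamari-minimum and a unique Tamari-maximum.'' Convexity alone does not make $C_v$ an interval --- an antichain is convex --- so you genuinely need boundedness, and your only justification for it is that $C_v$ is isomorphic to ``the lattice $\Tam(v)$.'' But in this paper the statement that $\Tam(v)$ is a lattice (Theorem \ref{thmlattice}) is \emph{deduced from} the fact that $I(v)$ is an interval of the ordinary Tamari lattice (Proposition \ref{prop_Iv_interval}), which is precisely the boundedness you are trying to establish; nothing in the definition of $\Tam(v)$ as a transitive closure of covering moves tells you a priori that it has a least or greatest element. So as written the argument is circular (or, read strictly, the key step is unjustified). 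Note also that convexity does not give closure of $C_v$ under meets or joins in the ambient Tamari lattice, so you cannot get uniqueness of minimal elements that way either. The fix is the paper's: show directly that from any tree with canopy $v$ one can keep performing valid right (resp.\ left) rotations until none remain, and that the terminal tree is the unique ``comb-along-$v$'' tree $\overline{T_{\rm max}(v)}$ (resp.\ $\overline{T_{\rm min}(v)}$) of Figures \ref{fig:arbremaxtamv} and \ref{fig:arbremintamv}; this exhibits a global maximum and minimum of $C_v$ without ever invoking Theorem \ref{thmlattice}, and then your convexity argument finishes the proof.
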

An example of Theorem \ref{thmpartitiontamlat} is given in Figure \ref{fig:partitionlattice}.

\begin{figure}  [H]  
  \centering
  \includegraphics[width=8cm]{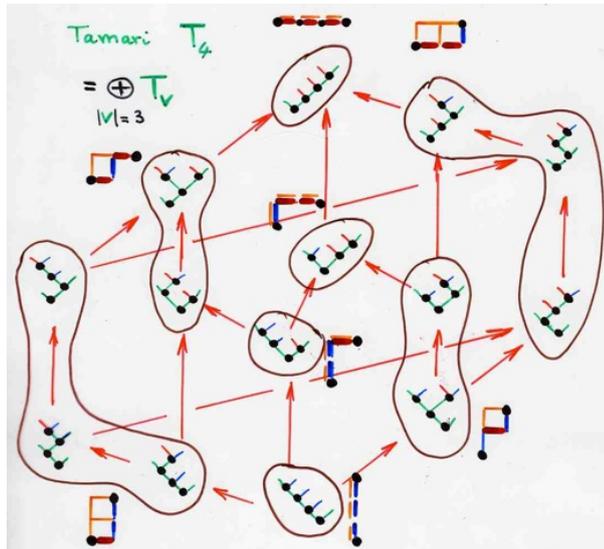}
  \caption{The decomposition of the Tamari lattice on complete binary trees with 4 interior vertices into the union of 8 disjoint intervals Tam$(v)$ (Theorem \ref{thmpartitiontamlat}).}
  \label{fig:partitionlattice}
\end{figure}

\newpage 

\begin{figure}  [H]
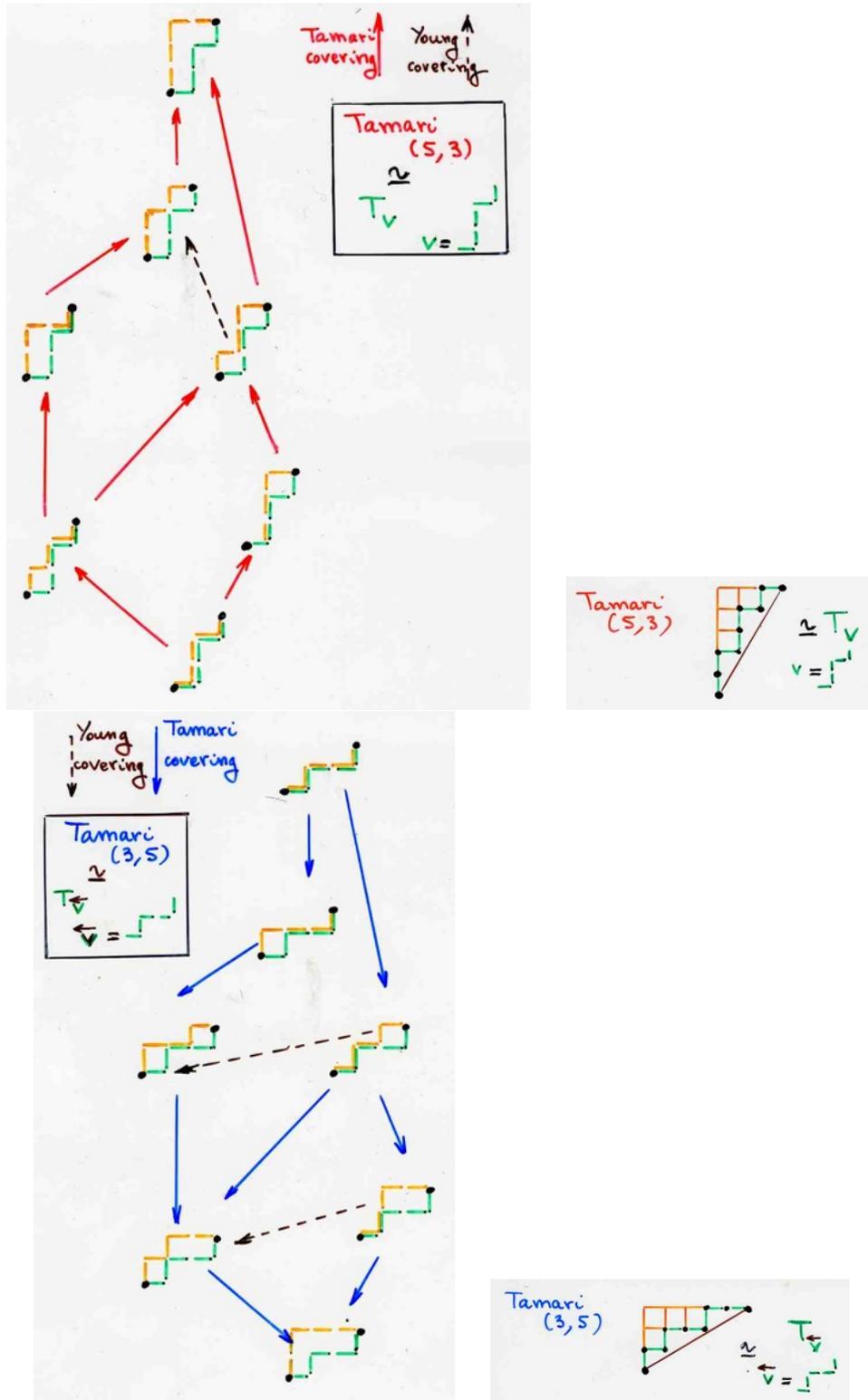
     
  \centering
  \includegraphics[width=7.6cm]{Figchap2/Fig7a} \hspace{3mm} 
  \includegraphics[width=4.5cm]{Figchap2/Fig7b}
  \includegraphics[width=6.1cm]{Figchap2/Fig7c}
 \hspace{3mm} \includegraphics[width=5.2cm]{Figchap2/Fig7d}

   \caption{The lattice Tamari(5,3) is the lattice based on the minimum path above the segment passing through the origin and the point (3,5)  (top). The dual of the lattice Tamari(3,5) is the dual of the lattice based on the minimum path above the segment passing through the origin and the point (5,3)  (bottom). Note that Tamari(5,3) is isomorphic to the dual of Tamari(3,5).}
  \label{fig:duality}
\end{figure}

\subsection{Outline}

In Section \ref{canopybinarytree}, we define the concept of the canopy of a binary tree. We state a transformation (a bijection) from the set of  binary trees with $n$ vertices to the set of pairs of weakly non-crossing paths that contain $n-1$ edges each such that they share the same endpoints. The canopy of a binary tree is a word, which we identify with a path made from an alphabet based on two letters, and that is mainly used to partition the Tamari lattice into intervals (sublattices) that are of importance to us. These sublattices are isomorphic to the lattices Tam$(v)$ mentioned in Section \ref{sectmainresults}. The fact that the transformation mentioned previously is a bijection is proved in Section \ref{secreversebijection}. Some simple and useful properties of the canopy of binary trees related with the Tamari lattice are given in Section \ref{sexcanopyordinaryTamari}. The demonstrations of the main results presented in Section \ref{sectmainresults} appear in Section \ref{secproofsmainresults}. Finally, the connections of our work with the diagonal coinvariant spaces and some perspectives appear in Section \ref{seccoinvariantperspec}.

\section{Canopy of a binary tree}\label{canopybinarytree}

For any binary tree $B$, we construct a word $w(B)$ on the alphabet $\{ a,\bar{a},b,\bar{b} \}$. Walking clockwise around $B$ and starting at the root, we write the letter $a$ when we walk on a left edge for the first time and $\bar{a}$ when we walk on a left edge for the second time. Similarly we repeat with the letters $b$ and $\bar{b}$ for the right edges (see Figure \ref{fig:walkaroundbinary} for an example). 
\begin{figure}  [hb] 
  \centering
  \includegraphics[width=4.5cm]{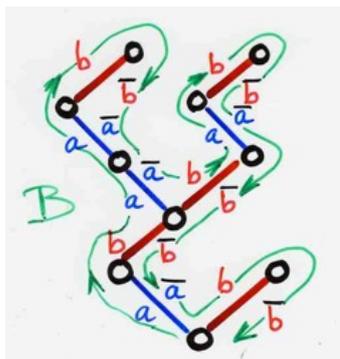}
  \caption{Walk around a binary tree B: the word w(B).}
  \label{fig:walkaroundbinary}
\end{figure}
From $w(B)$, we construct two subwords. The first subword $u(B)$ is obtained by keeping track only of the two letters $\{\bar{a},\bar{b}\}$ in $w(B)$. We identify a path with $u(B)$ by replacing in this sequence the letter $\bar{a}$ by a north step and $\bar{b}$ by an east step. The canopy $v(B)$ of $B$, which is also a sub word of $w(B)$, is obtained similarly by keeping track only of the letters $\{\bar{a},b\}$. We identify a path with $v(B)$ by replacing in this sequence the letter $\bar{a}$ by a north step and $b$ by an east step\footnote{So by abuse of notation, we will refer to the paths $u(B)$ and $v(B)$.}. The concept of the canopy was introduced using a different terminology in \cite{LodayRonco}. For the binary tree in Figure \ref{fig:walkaroundbinary}, we show an example of all these words in Figure \ref{fig:wordsassociatedtree} and draw the paths $u(B)$ and $v(B)$ in Figure \ref{fig:pairpaths}. 
\begin{figure}
  \centering
  \includegraphics[width=9cm]{Figchap3/Fig10}
  \caption{The words $w(B), u(B)$ and $v(B)$ associated to the binary tree in Figure \ref{fig:walkaroundbinary}.}
  \label{fig:wordsassociatedtree}
\end{figure}
\begin{figure}
  \centering
  \includegraphics[width=5.5cm]{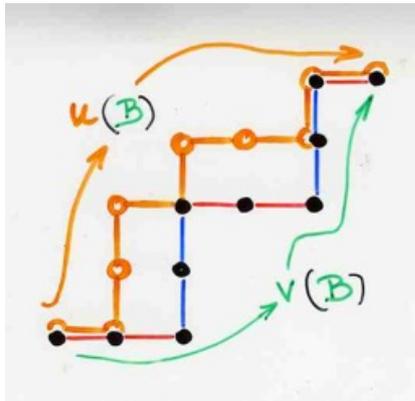}
  \caption{The pair (u,v) of paths associated to a binary tree in Figure \ref{fig:walkaroundbinary}.}
  \label{fig:pairpaths}
\end{figure}
It is no accident that we use the same letter $v$ for the canopy as the letter that defines the poset in the previous section. Before explaining this, we can mention an easy property:

\begin{Lemma}
For any given binary tree $B$, the path $u(B)$ is weakly above the canopy (also a path) $v(B)$.
\end{Lemma}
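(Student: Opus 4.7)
The plan is to compare $u(B)$ and $v(B)$ step by step through the larger word $w(B)$. First, I would note that both paths have the same number of north steps (one per letter $\bar{a}$ in $w(B)$, i.e.\ one per left edge of $B$) and the same number of east steps (one per right edge of $B$, since right edges contribute one $b$ and one $\bar{b}$ each). Hence both paths share the endpoint $(e,n)$ where $e$ and $n$ are the numbers of right and left edges of $B$, and it suffices to show that for every $i$, the $i$-th east step of $u(B)$ is taken at a height no smaller than the $i$-th east step of $v(B)$.

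The key observation is that the subword of $w(B)$ consisting of the letters in $\{b,\bar{b}\}$ is a balanced parenthesis word. Indeed, walking clockwise around $B$, a descent along a right edge (letter $b$) is always followed, after the exploration of its right subtree, by the ascent along the same edge (letter $\bar{b}$), so the $b$'s and $\bar{b}$'s form a properly nested matching. Consequently, every prefix of $w(B)$ contains at least as many $b$'s as $\bar{b}$'s, and in particular the position of the $i$-th occurrence of $b$ in $w(B)$ is weakly less than the position of the $i$-th occurrence of $\bar{b}$.

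Now, in $u(B)$ (respectively $v(B)$), the $i$-th east step corresponds to the $i$-th occurrence of $\bar{b}$ (respectively $b$) in $w(B)$, while the north steps of both paths are in order-preserving bijection with the $\bar{a}$'s of $w(B)$. Therefore the height of $u(B)$ immediately before its $i$-th east step equals the number of $\bar{a}$'s preceding the $i$-th $\bar{b}$ in $w(B)$, and similarly for $v(B)$ with the $i$-th $b$. Since the $i$-th $b$ appears weakly earlier than the $i$-th $\bar{b}$, the former is preceded by no more $\bar{a}$'s than the latter, yielding the required height inequality at each east step and hence the ``weakly above'' relation.

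I do not anticipate any real obstacle here: the argument is essentially a direct translation once the Dyck-word nature of the $b/\bar{b}$ subword is recognized. The only minor point to justify carefully is that ``weakly above'' for two lattice paths with a common endpoint is equivalent to the height inequality at each east step (equivalently, the $x$-inequality at each north step), together with the verification that the clockwise-traversal convention is indeed what produces a proper nesting of the right-edge parentheses.
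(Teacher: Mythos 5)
Your proof is correct and follows essentially the same route as the paper's: the paper's one-line argument is precisely that the $i$-th occurrence of $b$ precedes the $i$-th occurrence of $\bar b$ in $w(B)$, and that these letters mark the east steps of $v(B)$ and $u(B)$ respectively, while the shared $\bar a$'s mark the north steps of both. You have simply filled in the details the paper leaves implicit (the proper nesting of the $b/\bar b$ subword and the reduction of ``weakly above'' to a height comparison at corresponding east steps).
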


\begin{proof}
This is straightforward since the occurrences of the letter $b$ precede the occurrences of the letter $\bar{b}$ in $w(B)$, and these two letters determine the east steps of $v(B)$ and 
$u(B)$ respectively. 
\end{proof}


Let $\overline{B}$ be a complete binary tree with $n$ vertices. It is not difficult to prove that the canopy can also be defined using the following two equivalent definitions.

The second definition of the canopy, which is defined in \cite{LodayRonco}, can be described as follows. Walking around $\overline{B}$ clockwise starting at the root, record the sequence of left and right external edges, except the first and last external edges. From this sequence, construct a path by changing the right external edges into north steps and the left external edges into east steps. The path obtained is also the canopy (see Figure \ref{fig:canopythirddefinition} for an example). Because of this definition, we define the interior canopy of the complete binary tree $\overline{B}$ to be the canopy of $B$.
\begin{figure}  [H]
  \centering
  \includegraphics[width=4.5cm]{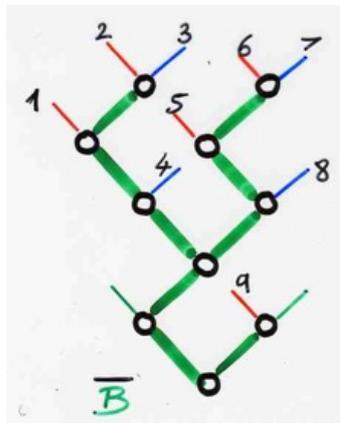}
  \caption{Second definition of the canopy.}
  \label{fig:canopythirddefinition}
\end{figure}

For the third definition of the canopy, we first label the $n$ vertices of the binary tree $B=(L,r,R)$ by the sequence of integers following $B$ according to the symmetric order. This order is defined recursively by first visiting the vertices on the left subtree $L$, then the root $r$, then finishing with the vertices on the right subtree $R$ (see Figure \ref{fig:canopyseconddefinition}). Construct a path from the sequence of vertices with labels $\{ 1,2,...,n-1 \}$ in $B$ such that the $i^{th}$ step is a north step if the vertex with label $i$ in $B$ has a right child, and an east step otherwise. This path is also the canopy. An example of this procedure is given in Figure \ref{fig:canopyseconddefinition}.  
\begin{figure}[H]
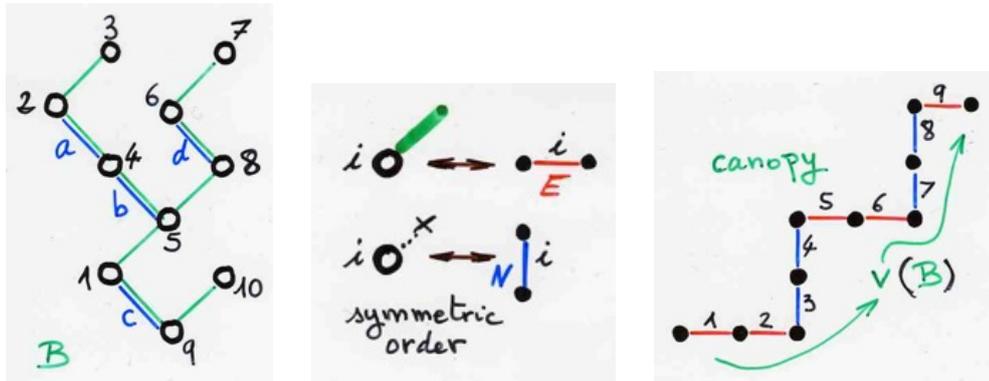

  \centering
  \includegraphics[width=3.5cm]{Figchap3/Fig12a}\hspace{4mm} \includegraphics[width=4cm]{Figchap3/Fig12b}\hspace{4mm} \includegraphics[width=4.5cm]{Figchap3/Fig12c}
  \caption{Third definition of the canopy (left and middle). The canopy (right).}
  \label{fig:canopyseconddefinition}
\end{figure}

\section{Reverse bijection: from pairs of non-crossing paths to complete binary trees}\label{secreversebijection}

We will prove the following proposition at the end of this section.
\begin{Proposition}\label{Prop_uv_bijection}
The map defined in Section \ref{canopybinarytree} associating the pair $(u,v)$ to a binary tree is a bijection from the set of binary trees with $n$ vertices to the set of unordered pairs of non-crossing paths, consisting each of a total of $n-1$ north and east steps, with the same endpoints.
\end{Proposition}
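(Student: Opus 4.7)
The plan is to exhibit an explicit inverse to the map $\phi : B \mapsto (u(B),v(B))$, arguing by induction on the number of vertices $n$. The base case $n=1$ is immediate: both paths are empty and there is a unique single-vertex tree.

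For the inductive step, I will first record how $\phi$ respects the recursive decomposition $B = (L,r,R)$. Inspecting the walking-around word $w(B)$, one obtains four subcases: $w(B) = \epsilon$ if $L = R = \emptyset$; $w(B) = a \cdot w(L) \cdot \bar{a}$ if only $R$ is empty, which gives $u(B) = u(L) \cdot N$ and $v(B) = v(L) \cdot N$; $w(B) = b \cdot w(R) \cdot \bar{b}$ if only $L$ is empty, which gives $u(B) = u(R) \cdot E$ and $v(B) = E \cdot v(R)$; and $w(B) = a \cdot w(L) \cdot \bar{a} \cdot b \cdot w(R) \cdot \bar{b}$ if both are nonempty, which gives $u(B) = u(L) \cdot N \cdot u(R) \cdot E$ and $v(B) = v(L) \cdot N E \cdot v(R)$.

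The inverse algorithm will then read the case directly from the structure of $(u,v)$. The last letter of $u$ decides whether $R$ is empty: a terminal $N$ forces $R = \emptyset$ (and, as I will verify, forces $v$ also to end in $N$), whereupon I strip the trailing $N$ from both paths and recurse on the shortened pair to recover $L$. A terminal $E$ signals $R \neq \emptyset$, and I must locate the ``split point'' $k = |L|$ at which the root's contribution appears. The crucial structural property I will use is that at position $k$ both paths stand at a common lattice point, having just performed the root's $\bar{a}$-step simultaneously, and that the induced prefix pair and shifted suffix pair must each be valid non-crossing pairs with matching endpoints on which the induction hypothesis applies. I will prove that such a $k$, including the degenerate value $k = 0$ corresponding to $L = \emptyset$, is uniquely determined by $(u,v)$.

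The main obstacle I anticipate is this uniqueness of the split point when both subtrees are nonempty: a priori $(u,v)$ may exhibit several positions where the two paths coincide, and it is not immediately clear which is the correct split. I expect to resolve this by arguing that any alternative candidate would force either the prefix pair or the suffix pair to fail the non-crossing property or the endpoint-matching condition, exploiting that $u$ lies weakly above $v$ together with the rigid shape $v(L) \cdot NE \cdot v(R)$ of the decomposition. Once the recursion is shown to be well-defined, a routine induction confirms that $\phi$ and the constructed inverse compose to the identity in both directions, and the cross-check that both sides have cardinality $C_n$ provides useful confirmation of the bijection.
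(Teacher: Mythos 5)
Your route is genuinely different from the paper's. The paper constructs an explicit global inverse, the ``push-gliding'' algorithm, which rebuilds the tree from the horizontal distances $a_i$ between the $i$-th north steps of the two paths; its combinatorial engine is Lemma \ref{lemunitsquaresbetween}, identifying $a_i$ with the right height of the $i$-th left edge. You instead recurse on the root decomposition $B=(L,r,R)$. Your four factorization formulas are correct (each vertex contributes $a\,w(L)\,\bar{a}$ and then $b\,w(R)\,\bar{b}$ to the clockwise word), as is the claim that $u$ ends in $N$ exactly when $R=\emptyset$, in which case $v$ must end in $N$ as well. The cardinality remark at the end is, as you say, only a sanity check and carries no logical weight.

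The one step you have not actually carried out is the uniqueness of the split point; you correctly flag it as the crux, but as written it is a plan rather than a proof. It does go through, and your proposed mechanism (a wrong candidate makes one of the induced pairs fail) is the right one. The clean invariant that settles it is precisely the paper's Lemma \ref{lemunitsquaresbetween}: the $i$-th north steps of $u$ and $v$ coincide iff the $i$-th left edge has right height $0$; the root's left edge has right height $0$ and every left edge of $R$ has right height at least $1$, so the correct split is the \emph{last} north step shared by $u$ and $v$ (and $L=\emptyset$ iff no north step is shared). Conversely, for an arbitrary non-crossing pair this split is forced and valid: after the last shared north step, $v$ must step east (otherwise one gets either a later shared north step or a crossing), and the non-crossing condition for the shifted suffix pair $(u_2,v_2)$ amounts to every subsequent north step of $u$ lying \emph{strictly} left of the corresponding north step of $v$ --- which holds exactly when no later north step is shared, so any earlier candidate split fails there. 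Writing out this lemma closes your induction. To be fair, the paper's own proof also delegates the verification that the two maps are mutually inverse to the reader, so your sketch is not less complete than the original; but if you intend a self-contained proof, this uniqueness argument is the piece that must be written in full.
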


Before defining the inverse bijection, we need to state an equivalent definition of the pair $(u,v)$ associated to $B$. For a binary tree ${B}$, let $v(B)$ be the canopy. The left edges in $B$ can be totally ordered using the symmetric order of their fathers in $B$. Note that under this order on left edges, the $i^{th}$ left edge corresponds to the $i^{th}$ north step in both $u(B)$ and $v(B)$. The right height of a vertex in $B$ is the number of right edges of the walk on $B$ from the root to the vertex. The right height of a left edge in $B$ is the right height of any of its vertices. We obtain the following simple fact, which when combined with the definition of the canopy $v(B)$, gives an equivalent definition of the pair $(u,v)$ associated to $B$.

\begin{Lemma}\label{lemunitsquaresbetween}
For $B$ a binary tree, the number of unit squares between the $i^{th}$ north step of $u(B)$ and the $i^{th}$ north step of $v(B)$ is equal to the right height of the $i^{th}$ left edge (in the symmetric order) of $B$ (see Figure \ref{fig:distancescheminscanope} for an example).  
\end{Lemma}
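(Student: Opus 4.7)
The plan is to reduce the statement to a single observation about the walk word $w(B)$: the $i$-th north step in both $u(B)$ and $v(B)$ comes from the same $i$-th occurrence of the letter $\bar a$ in $w(B)$, so the horizontal gap between them at height $i{-}1$ is controlled by the number of $b$'s and $\bar b$'s that appear before that $\bar a$.

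First I would identify which left edge of $B$ the $i$-th letter $\bar a$ records. Walking clockwise from the root, we emit $\bar a$ at the moment we finish traversing a left edge going back up, i.e.\ immediately after the subtree hanging on that left edge has been fully visited. By the recursive definition of the symmetric order (left subtree, root, right subtree), this is precisely the moment at which the father of that left edge is visited in symmetric order. Hence the $i$-th occurrence of $\bar a$ in $w(B)$ corresponds to the left edge whose father is the $i$-th vertex (in symmetric order) having a left child, which is exactly the $i$-th left edge in the symmetric order of fathers.

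Next I would locate the $i$-th north step of each path. Since $u(B)$ is the subword of $w(B)$ on $\{\bar a,\bar b\}$ (with $\bar a\mapsto N$, $\bar b\mapsto E$), the $i$-th north step of $u(B)$ starts at coordinates $(\#\bar b(i),\, i-1)$, where $\#\bar b(i)$ denotes the number of $\bar b$'s in $w(B)$ preceding the $i$-th $\bar a$. Similarly, since $v(B)$ is the subword on $\{\bar a,b\}$, its $i$-th north step starts at $(\#b(i),\, i-1)$. Both north steps sit at the same height band between $i-1$ and $i$, so the number of unit squares strictly between them is
\[
\#b(i) - \#\bar b(i).
\]

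Finally I would interpret this difference in tree terms. At the instant we write the $i$-th $\bar a$, the clockwise walk has descended through some right edges (producing $b$'s) and climbed back through some of them (producing $\bar b$'s); the difference $\#b(i)-\#\bar b(i)$ is exactly the number of right edges that are currently ``open'', which is the number of right edges on the walk from the root to the current vertex, i.e.\ the right height of that vertex. But the current vertex is the father of the $i$-th left edge (in symmetric order), and by definition the right height of the left edge equals the right height of its father. This gives the desired equality.

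The only delicate point, and thus the main obstacle, is the matching between the $i$-th $\bar a$ and the $i$-th left edge in symmetric order of fathers; once this bookkeeping is carefully justified from the recursive definitions of $w(B)$ and of the symmetric order, the rest is a direct count of letters in $w(B)$.
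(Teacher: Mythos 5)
Your proof is correct and follows essentially the same route as the paper's: both arguments reduce the count of unit squares to the number of right edges that have been traversed exactly once at the moment the $i$-th $\bar a$ is written, and identify that quantity with the number of right edges on the root-to-vertex path, i.e.\ the right height. Your version merely makes explicit the coordinate bookkeeping ($\#b(i)-\#\bar b(i)$) and the matching of the $i$-th $\bar a$ with the $i$-th left edge in symmetric order, which the paper asserts without detail.
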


\begin{figure}[H]
  \centering
  \includegraphics[width=5.5cm]{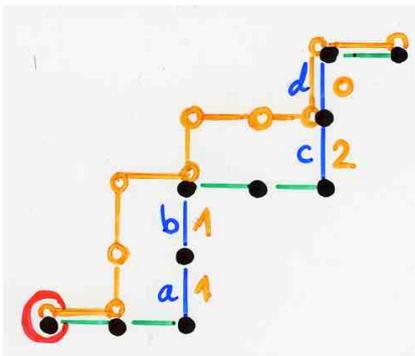}
  \caption{(with Figure 12 (left)) The lemma 6.}
  \label{fig:distancescheminscanope}
\end{figure}

\begin{proof}
In the definition of $w$, we walk clockwise around the binary tree $B$. We record an east step in $v(B)$ when we walk around a right edge of $B$ for the first time and an east step in $u(B)$ when we walk around a right edge of $B$ for the second time. To count the number of unit squares between the $i^{th}$ north steps of $u(B)$ and $v(B)$, we need to count how many right edges have been walked around once time when we walk on the $i^{th}$ left edge of $B$ for the second time. It is easy to show that this corresponds to the number of right edges on $B$ from the root to the $i^{th}$ left edge of $B$, which is the definition of the right height of this edge.
\end{proof}

We now give the reverse transformation $(u,v) \rightarrow B$. Start with a pair of non-crossing paths $D'$ and $D$ with the same endpoints such that $D'$ is weakly above $D$. Draw $D$ and for each north step of $D$, assign the value $a_i$ to be the distance between the $i^{\rm {th}}$ north step in $D$ and the $i^{\rm {th}}$ north step in $D'$. Now, repeat the following process for each north step in $D$, from bottom to top (see Figure \ref{fig:pushglidingalgo} for a better understanding). 

The "push-gliding" algorithm on pairs of non-crossing paths (with the same endpoints) is defined as follows. First let the initial vertex of $D$ be the root. Slide the first north step of $D$ horizontally with all the vertices and all the edges above him so that it is at distance $a_1$ from the root. Let $s_1$ be the vertex at the bottom of this first north step. Let $e_1$ be the adjacent edge of $s_1$ to its left (if any). This first north step pushes $s_1$ down and all the vertices and the edges that are connected to $s_1$ when $e_1$ is removed. If $a_1=0$, then the vertex at the top of the first edge becomes the root. Now redo the same procedure with the second north step and so on\footnote{Note that you don't want to create cycles in this procedure, so you might have to make some edges longer to avoid them.}. At the end, you will obtain a binary tree with $n$ vertices. Now apply a reflection to this tree. This is the end of the push-gliding algorithm. Thus, we obtain a proof of Proposition \ref{Prop_uv_bijection}.

\begin{proof} [Proof of Proposition \ref{Prop_uv_bijection}]
We leave to the reader the proof of the fact that the pair $(u,v)$ and the push-gliding algorithm are inverse transformations. Therefore they are both bijections. 
\end{proof}
The bijection between binary trees and pairs of paths $(u,v)$ was introduced in a different form by Delest and Viennot \cite{XV_DeVi84}. We have described here a new version of the bijection which involves a "push-gliding" algorithm, and fits our purpose.
\begin{figure}[H]
  \centering
  \includegraphics[width=13.2cm]{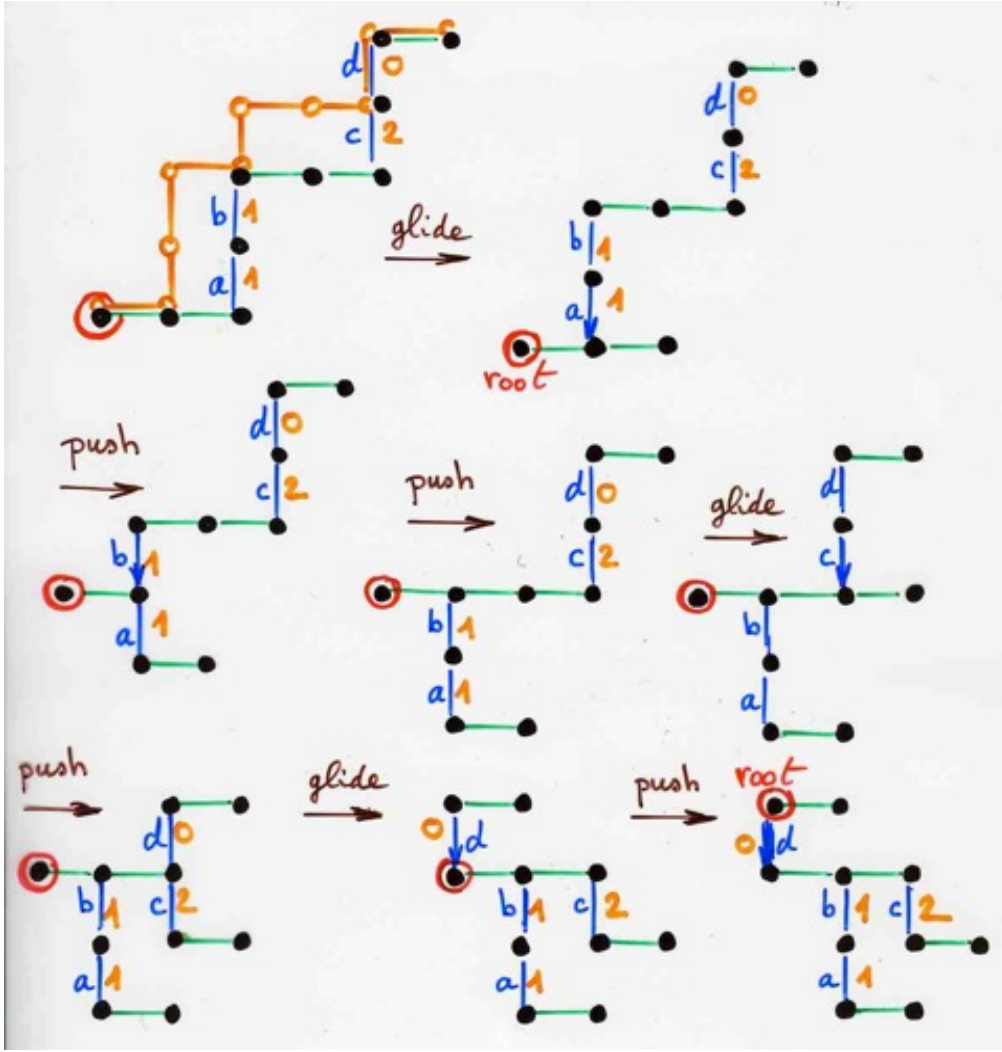}
  \caption{The "push-gliding" algorithm}
  \label{fig:pushglidingalgo}
\end{figure}

\section{Canopy in the ordinary Tamari lattice}\label{sexcanopyordinaryTamari}


For a complete binary tree $\overline{T}$, let $c(\overline{T})$ be the sequence of (external) left edges $\backslash$ and right edges $/$  recorded walking around $\overline{T}$ as in the definition of the interior canopy\footnote{The interior canopy of a complete binary tree is defined in the same paragraph as the second definition of the canopy.}.

\begin{Lemma}\label{lem:canopyrotation}
Let $\overline{T}$ be a complete binary tree, $s$ and $t$ vertices of $\overline{T}$ and $A, B, C$ subtrees of $\overline{T}$ as in Figure \ref{fig:rotationbinary} defining a right rotation from $\overline{T}$ to $\overline{T'}$. Let $c(\overline{T}) = v_1 c(A) / c(C) v_2$ be the sequence of external edges of the interior canopy of $\overline{T}$, where $v_1$ and $v_2$ are some sequences of $\backslash$ and $/$. We have the following relations between the interior canopies of the complete binary trees $A,B,C,\overline{T},\overline{T'}$: \\ 
- if $B$ contains more than one vertex, then $c(\overline{T})=c(\overline{T'})$,\\ 
- if $B$ is a single vertex, then $c(\overline{T'})=v_1 c(A) \backslash c(C) v_2$.
\end{Lemma}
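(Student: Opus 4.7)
The plan is to compute both $c(\overline{T})$ and $c(\overline{T'})$ directly from the clockwise-walk description of the interior canopy, decomposing the walk according to the three subtrees $A$, $B$, $C$, and to locate the single position where the rotation can produce a change.

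First I would write the full sequence of external edges of $\overline{T}$ (the sequence recorded by the walk, before stripping the first and last) as the concatenation of the corresponding sequences for $A$, $B$, $C$, with the convention that a subtree reduced to a single external vertex contributes just one external edge, whose type $\backslash$ or $/$ is determined by whether that vertex is a left or right child of its parent. Removing the first and last entries gives $c(\overline{T})$; the analogous decomposition gives $c(\overline{T'})$, but now with $B$ sitting to the left of $t$ instead of to the right. I would then observe that the contributions of $A$ and of $C$ are the same in both trees: $A$ is a left subtree in each ($A$ is the left subtree of $t$ in $\overline{T}$ and of $s$ in $\overline{T'}$), and $C$ is a right subtree in each, so when $A$ or $C$ has internal structure its external-edge sequence is intrinsic, and when one of them is a single external vertex its lone edge is of the same type in both trees. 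Consequently, any discrepancy between $c(\overline{T})$ and $c(\overline{T'})$ can only come from the block produced by $B$.

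The core step is the analysis of $B$'s block. When $B$ contains at least one internal vertex, the leftmost external vertex of $B$ is necessarily the left child of $B$'s leftmost internal vertex and the rightmost is the right child of $B$'s rightmost internal vertex, so the block produced by $B$ equals $\backslash\, c(B)\, /$ in both trees; this forces $c(\overline{T}) = c(\overline{T'})$, giving the first assertion. When $B$ is a single external vertex, the block is a single edge, namely $/$ in $\overline{T}$ (since $B$ is the right subtree of $t$) and $\backslash$ in $\overline{T'}$ (since $B$ has become the left subtree of $t$); identifying this one symbol as the $/$ appearing in the decomposition $c(\overline{T}) = v_1\, c(A)\, /\, c(C)\, v_2$ produces the second assertion by flipping it to $\backslash$.

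The main bookkeeping obstacle is making the strings $v_1$ and $v_2$ in the decomposition precise when $A$ or $C$ is itself a single external vertex, so that $c(A)$ or $c(C)$ is empty and the first or last stripped external edge is actually contributed by $A$ or $C$. This is handled uniformly by the walking-around definition: only the very first entry of $A$'s block and the very last entry of $C$'s block are ever removed to form the interior canopy, and whatever short boundary strings remain on either side of $B$'s one symbol can simply be absorbed into $v_1$ and $v_2$.
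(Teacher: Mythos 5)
Your proposal is correct and takes the same route as the paper, whose entire proof is the one line ``this is straightforward using the second definition of the canopy'': you decompose the clockwise walk into the blocks contributed by $A$, $B$, $C$ and observe that only $B$'s block can change, flipping a single $/$ to $\backslash$ exactly when $B$ is a lone external vertex. Your version merely supplies the leaf-by-leaf bookkeeping (including the boundary conventions for $v_1$, $v_2$) that the paper leaves implicit.
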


\begin{proof}
This is straightforward using the second definition of the canopy. 
\end{proof}

Thus the interior canopy of a complete binary tree is invariant under a rotation if and only if $B$ contains more than one vertex. We call such a rotation a valid rotation. 

We define the following order relation on the set of words in letters $\{ \backslash, / \}$:
$$ v_1 / v_2 / ... / v_k \prec v_1 \backslash v_2 \backslash ... \backslash v_k ,$$
where the $v_i$ are words of letters $\{ \backslash, / \}$. The poset of words of length $n$ in letters $\{ \backslash, / \}$ is isomorphic to the Boolean lattice of subsets of a set of cardinality $n$.

\begin{Corollary}\label{corol:tamordercanopyorder}
In the Tamari lattice, if $\overline{T} \leq \overline{T'}$, then $c(\overline{T}) \leq c(\overline{T'})$. 
\end{Corollary}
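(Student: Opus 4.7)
The plan is to reduce to the case of a single covering relation and then apply Lemma~\ref{lem:canopyrotation} directly. Since the Tamari order is the transitive closure of the covering relations (right rotations), any relation $\overline{T} \leq \overline{T'}$ corresponds to a finite chain
\[
\overline{T} = \overline{T_0} \lessdot \overline{T_1} \lessdot \cdots \lessdot \overline{T_k} = \overline{T'},
\]
so by induction on $k$ (and by transitivity of $\preceq$ on words in $\{\backslash, /\}$), it suffices to establish the statement when $\overline{T'}$ covers $\overline{T}$.

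Assume then that $\overline{T'}$ is obtained from $\overline{T}$ by a single right rotation, with subtrees $A,B,C$ as in Figure~\ref{fig:rotationbinary}. Lemma~\ref{lem:canopyrotation} leaves only two cases. In the first case, $B$ has more than one vertex and the rotation is valid, so $c(\overline{T}) = c(\overline{T'})$ and the inequality holds trivially. In the second case, $B$ is a single vertex, and
\[
c(\overline{T}) = v_1\, c(A)\, / \, c(C)\, v_2, \qquad c(\overline{T'}) = v_1\, c(A)\, \backslash \, c(C)\, v_2.
\]
These two words have the same length and differ at exactly one position, where $c(\overline{T})$ carries $/$ and $c(\overline{T'})$ carries $\backslash$. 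Under the defining relation $v_1 / v_2 /\cdots / v_k \prec v_1 \backslash v_2 \backslash \cdots \backslash v_k$ and the resulting Boolean lattice structure on words of a given length, flipping a single $/$ into a $\backslash$ gives a covering in the word order, hence $c(\overline{T}) \preceq c(\overline{T'})$.

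Combining both cases yields the corollary. The only part that requires any real care is the identification of the canopy comparison with the component-wise Boolean order; once that is in hand, the argument is essentially a one-line appeal to Lemma~\ref{lem:canopyrotation} together with induction on the length of a covering chain. I do not expect any genuine obstacle: the heavy lifting has already been done in the preceding lemma, which isolated exactly the combinatorial effect of a right rotation on the external edge sequence.
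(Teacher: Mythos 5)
Your proof is correct and is exactly the argument the paper intends: the corollary is stated without proof precisely because it follows immediately from Lemma~\ref{lem:canopyrotation} by reducing to a single covering relation and observing that a right rotation either preserves the interior canopy or flips one $/$ to $\backslash$, which is an instance of the defining relation $v_1 / v_2 \prec v_1 \backslash v_2$. Nothing further is needed.
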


\begin{Proposition}\label{prop_Iv_interval}
The set I$(v)$ of complete binary trees having interior canopy $v$ is an interval of the ordinary Tamari lattice on complete binary trees with $|v|+1$ interior vertices. 
\end{Proposition}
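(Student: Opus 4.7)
The plan is to split the proof into two parts: first, to establish convexity of $I(v)$ inside the Tamari lattice, and second, to exhibit explicit minimum $\hat{0}_v$ and maximum $\hat{1}_v$ of $I(v)$ in the Tamari order; combining these gives $I(v) = [\hat{0}_v, \hat{1}_v]$. Convexity is immediate from Corollary \ref{corol:tamordercanopyorder}: given $\overline{T_1} \leq \overline{T} \leq \overline{T_2}$ in Tamari with $\overline{T_1}, \overline{T_2} \in I(v)$, applying the corollary twice squeezes $v = c(\overline{T_1}) \leq c(\overline{T}) \leq c(\overline{T_2}) = v$ in the Boolean order on $\{\backslash, /\}$-words of length $|v|$, forcing $c(\overline{T}) = v$ and hence $\overline{T} \in I(v)$.

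For the minimum and maximum, I would use the bijection of Section \ref{secreversebijection}, which parametrizes $I(v)$ by the paths $u$ weakly above $v$ with the same endpoints. The natural candidate for $\hat{0}_v$ is the complete binary tree associated to the pair $(u,v) = (v,v)$; by Lemma \ref{lemunitsquaresbetween} this is the unique tree in $I(v)$ whose left edges all have right height zero. Dually, $\hat{1}_v$ corresponds to the pair $(u_{\mathrm{top}}, v)$, where $u_{\mathrm{top}} = N^p E^q$ is the uppermost path with the same endpoints as $v$ (with $p,q$ the numbers of north and east steps of $v$). Both lie in $I(v)$ by construction.

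The critical step is to verify $\hat{0}_v \leq \overline{T} \leq \hat{1}_v$ in the Tamari order for every $\overline{T} \in I(v)$. For the lower bound, the idea is that if $\overline{T} \in I(v) \setminus \{\hat{0}_v\}$, then some left edge of $\overline{T}$ has positive right height (equivalently, some $a_i > 0$ in the pair associated to $\overline{T}$), and this exhibits a subtree of the form $(A, s, (B, t, C))$ with $|B| > 1$ on which a left rotation applies. By Lemma \ref{lem:canopyrotation}, this rotation preserves the canopy and yields $\overline{T'} \in I(v)$ with $\overline{T'} < \overline{T}$; iterating in the finite Tamari lattice gives $\hat{0}_v \leq \overline{T}$. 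The argument for $\hat{1}_v$ is dual, using canopy-preserving right rotations. The main obstacle lies exactly in this structural step: translating the extrinsic condition $\overline{T} \neq \hat{0}_v$ (phrased via the bijection of Section \ref{secreversebijection}) into the intrinsic existence in $\overline{T}$ of a rotation site whose middle subtree has more than one vertex, so that the resulting left rotation both descends $\overline{T}$ in Tamari and preserves the canopy $v$.
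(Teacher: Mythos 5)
Your proposal follows essentially the same route as the paper: both prove the convexity/characterization direction via Corollary \ref{corol:tamordercanopyorder}, and both obtain the extremal elements as the trees of $I(v)$ admitting no canopy-preserving (valid) rotation, reached from an arbitrary tree of $I(v)$ by iterating such rotations inside $I(v)$. The structural step you flag as the main obstacle is easily dispatched and is implicitly what the paper does: if some left edge $e$ of $\overline{T}$ has positive right height, let $s\to t$ be the last right edge on the path from the root to $e$; then $t$ is a right child of $s$ whose left subtree contains an internal vertex and hence has more than one vertex, so a valid left rotation applies there --- and the paper's own proof asserts the existence, termination and uniqueness of the endpoint of such rotation sequences at essentially the same level of detail, describing $\overline{T_{\rm min}(v)}$ and $\overline{T_{\rm max}(v)}$ by their branch structure rather than through the $(u,v)$ bijection as you do.
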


\begin{figure}[H]
  \centering
  \includegraphics[width=5.4cm]{Tam-107}
  \caption{The canopy $v$}
  \label{fig:canopypourarbres}
\end{figure}
\begin{proof}
Let $\overline{T}$ be a complete binary tree with interior canopy $v$, where $v$ is given in Figure \ref{fig:canopypourarbres}. We can always perform a valid right rotation on $\overline{T}$ if and only if there exists vertices $s,t$ with subtrees $A,B,C$ as in Lemma \ref{lem:canopyrotation} (left part of Figure \ref{fig:rotationbinary}). In a right rotation, the length of the left branch rooted in $s$ decreases by one. Starting from $\overline{T}$, by following a sequence of valid rotations, we will always get a tree (with interior canopy $v$) where no further valid right rotations are possible. In such a tree, all the left branches will be reduced to sequences of left edges, where all interior vertices have a right subtree $B$ that consists of a single vertex. This complete binary tree is unique and we denote it by $\overline{T_{\rm max}(v)}$. The binary tree $T_{\rm max}(v)$ is displayed in Figure \ref{fig:arbremaxtamv}.

Similarly, let $\overline{T'}$ be a complete binary tree with canopy $v$. We can always perform a valid left rotation on $\overline{T'}$ if there exists vertices  $s,t$  with subtrees $A,B,C$ as in Lemma \ref{lem:canopyrotation} (right part of Figure \ref{fig:rotationbinary}). In a left rotation, the length of the right branch rooted at  $s$  decreases by one. As in the previous case of left rotation, by following a sequence of valid left rotations, we will always get a unique complete binary tree with canopy $v$ having all the right branches reduced to sequences of right edges, where all interior vertices have a left subtree $B$ that consists of a single vertex. We denote this complete binary tree by $\overline{T_{\rm min}(v)}$.
The binary tree $T_{\rm min}(v)$ is displayed in Figure \ref{fig:arbremintamv}.

We have proved that every complete binary tree $\overline{T}$ with interior canopy $v$ satisfies (for the order relation of the Tamari lattice)  
$$ \overline{T_{\rm min}(v)} \leq \overline{T} \leq \overline{T_{\rm max}(v)}. $$

Conversely, suppose that $\overline{T}$ is a complete binary tree satisfying the above relation. From corollary \ref{corol:tamordercanopyorder}, we have  
$$ v = c(\overline{T_{\rm min}(v)}) \leq c(\overline{T}) \leq c(\overline{T_{\rm max}(v)}) = v. $$
Proposition \ref{prop_Iv_interval} is proved.

\end{proof}
\begin{figure}[hb]
  \centering
  \includegraphics[width=5.4cm]{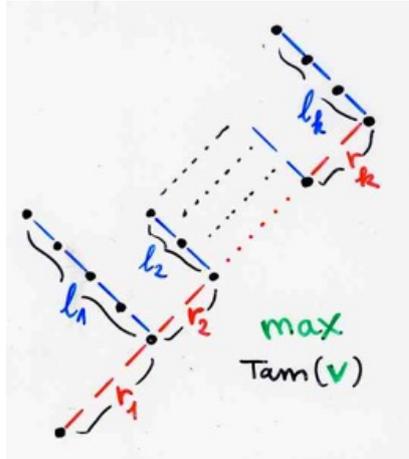}
  \caption{The maximum binary tree $T_{\rm max}(v)$ in Tam$(v)$, where $v$ is given in Figure \ref{fig:canopypourarbres}.}
  \label{fig:arbremaxtamv}
\end{figure}
\begin{figure}[hb]
  \centering
  \includegraphics[width=5.4cm]{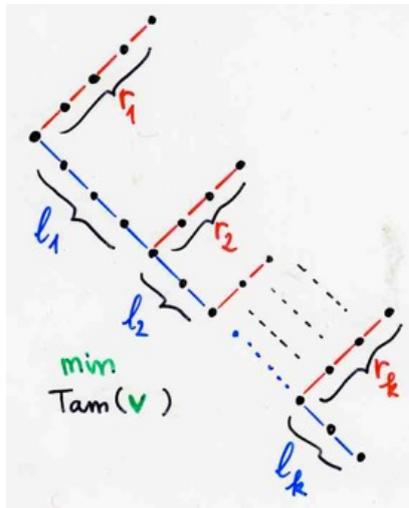}
  \caption{The minimum binary tree $T_{\rm min}(v)$ in Tam$(v)$, where $v$ is given in Figure \ref{fig:canopypourarbres}.} 
  \label{fig:arbremintamv}
\end{figure}



\section{Proofs of the main theorems}\label{secproofsmainresults}

Using the previously defined pair $(u,v)$ that sends a binary tree $T$ to a pair of non crossing paths $(u(T),v(T))$, we will show that the sequence
of values horiz$_v(p)$, for $p$ being the consecutive integer points on $u(T)$ from bottom to top, can be easily read on the vertices of $T$. The post order traversal on vertices of a binary tree $T$ is the order on vertices obtained by walking clockwise around $T$ and recording a vertex the last time we walk next to it (see Figure \ref{fig:arbreverticespostorder} for an example of the post order traversal on vertices). 

\begin{Lemma}\label{lem:seqhorizarbres}
For $T$ a binary tree, the sequence of right heights of the vertices of $T$ in post order traversal corresponds to the sequence of values horiz$_v(p)$, for $p$ being the consecutive integer points on $u(T)$ from bottom to top.
\end{Lemma}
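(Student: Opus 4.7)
The plan is to prove the lemma by induction on the number $n$ of vertices of $T$, writing $T = (L, r, R)$. The base case $n = 1$ is trivial: both $u(T)$ and $v(T)$ are empty, the single integer point $(0,0)$ has horizontal distance $0$, and the unique vertex $r$ has right height $0$. For the inductive step I handle the generic case in which $L$ and $R$ are both non-empty; the other two cases are strictly simpler. From the recursive definition of the word $w$ in Section \ref{canopybinarytree} one reads off $u(T) = u(L) \cdot N \cdot u(R) \cdot E$ and $v(T) = v(L) \cdot N \cdot E \cdot v(R)$. Let $(a,b)$ denote the common endpoint of $u(L)$ and $v(L)$. The integer points of $u(T)$, read from bottom to top, then split into three consecutive blocks: the $|L|$ integer points of $u(L)$, the $|R|$ integer points obtained by translating those of $u(R)$ by $(a, b+1)$, and the endpoint of $u(T)$. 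This is exactly the post-order enumeration of $T$, namely post-order of $L$, then of $R$, then $r$.

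The key step is to match the horizontal distances to the right heights, block by block. I claim: (i) for $p$ in the $u(L)$-block, $horiz_{v(T)}(p) = horiz_{v(L)}(p)$, because at any height $y \leq b$ the rightmost column reached by $v(T)$ equals that reached by $v(L)$; (ii) for $p$ in the translated $u(R)$-block, with preimage $p'$ in $u(R)$'s own coordinates, $horiz_{v(T)}(p) = horiz_{v(R)}(p') + 1$, the ``$+1$'' coming from the east step inserted between $N$ and $v(R)$ in the factorization of $v(T)$; (iii) at the endpoint of $u(T)$, $horiz_{v(T)} = 0$. On the tree side, the right heights in $T$ of vertices from $L$ coincide with their right heights in $L$, those from $R$ are incremented by $1$ (because of the root's right edge), and the right height of $r$ is $0$. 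Combining these three local matches with the induction hypothesis applied separately to $L$ and $R$ closes the induction.

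The main obstacle is essentially just identity (ii). It reduces to the computation that for every height $b+1+y'$ realized by $u(R)$, the rightmost column of $v(T)$ at that height equals $(a+1) + x_{\max,v(R)}(y')$: at height $b+1$ the extra unit is supplied by the $E$-step sitting between $N$ and $v(R)$, while for heights $\geq b + 2$ one simply uses that at those heights $v(T)$ is a translate of $v(R)$ by $(a+1, b+1)$. Once this recursion is nailed down, the matching of blocks and right heights is pure bookkeeping.
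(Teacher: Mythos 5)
Your proof is correct, and it takes a genuinely different route from the paper's. The paper argues directly on the clockwise walk encoded by the word $w(T)$: a vertex is recorded in post order just before an edge is traversed for the second time, and at that moment the value $\mathrm{horiz}_v(p)$ at the corresponding point of $u(T)$ equals the number of right edges that have been traversed exactly once so far --- which are precisely the right edges on the path from the root to the recorded vertex, i.e.\ its right height (the auxiliary observation about having to close a left edge before opening a new right edge is what keeps the two step-sequences aligned). Your argument instead exploits the recursive factorizations $w(T)=a\,w(L)\,\bar a\, b\,w(R)\,\bar b$, hence $u(T)=u(L)\cdot N\cdot u(R)\cdot E$ and $v(T)=v(L)\cdot N\cdot E\cdot v(R)$, and runs a structural induction; the three block identities (unchanged on the $L$-block, shifted by $+1$ on the $R$-block because of the interposed $E$ in $v(T)$, and $0$ at the endpoint) exactly mirror how right heights behave under the decomposition $T=(L,r,R)$, and your computation of the rightmost column of $v(T)$ at each height, including the boundary height $b+1$, is the right way to nail down identity (ii). The trade-off: the paper's global counting argument is shorter and makes the identification ``$\mathrm{horiz}_v(p)=$ number of currently open right edges'' conceptually transparent, but it is only sketched; your induction is longer in bookkeeping but fully self-contained and verifies the degenerate cases ($L$ or $R$ empty) without extra effort. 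Both are valid proofs of the lemma.
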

\begin{proof}
Note that walking clockwise around $T$, a vertex is recorded in the post order traversal on vertices just before an edge is visited for the second time (except for the last vertex, which is the root).
This lemma can be proved using the same kind of ideas as for Lemma \ref{lemunitsquaresbetween} and because of the following simple property. Suppose a vertex is recorded in post order traversal in $T$. If you visit a right edge for the first time later, you will then have to visit a left edge for the second time in between (see Figure \ref{fig:arbreverticespostorder} for an example).
\end{proof}
\begin{figure}[H]
  \centering
  \includegraphics[width=4.4cm]{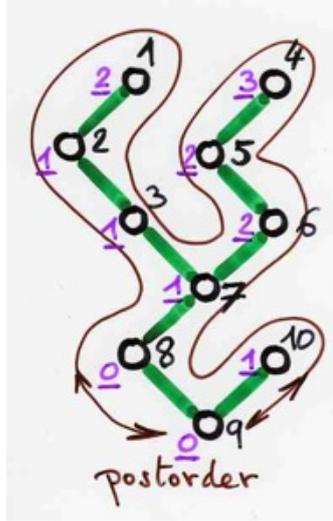}
  \caption{(with Figure \ref{fig:horizdistance}) The Lemma \ref{lem:seqhorizarbres}.}
  \label{fig:arbreverticespostorder}
\end{figure}

\begin{Proposition}\label{prop_isom_IV_Tv}
For any path $v$, the poset I$(v)$ is isomorphic to Tam$(v)$.
\end{Proposition}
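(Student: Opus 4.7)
The plan is to show that the bijection $T \mapsto u(T)$ from Proposition~\ref{Prop_uv_bijection}, restricted to complete binary trees whose interior canopy is $v$, is the desired isomorphism. Identifying $\overline{T} \in \mathrm{I}(v)$ with the underlying binary tree $T$ (which has canopy $v$), Proposition~\ref{Prop_uv_bijection} already supplies a bijection between $\mathrm{I}(v)$ and the set of paths $u$ weakly above $v$ with the same endpoints---the underlying set of $\mathrm{Tam}(v)$. What remains is to show that the covering relations match.

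First I would analyze how $u(T)$ changes under a valid right rotation $T \to T'$ at an edge $(s,t)$ with subtrees $A, B, C$ as in Figure~\ref{fig:rotationbinary} (valid meaning $|B|\geq 1$, by Lemma~\ref{lem:canopyrotation}). Reading off the contribution of $s$'s subtree to $u$ from the post order (each non-root vertex contributing one step, of type $\bar a$ if it is a left child and $\bar b$ if a right child), I obtain the local forms
\[
u(T) = \cdots X_A \, X_B' \, \bar{b} \, \bar{a} \, Y_C' \, \bar{b} \cdots, \qquad
u(T') = \cdots X_A \, X_B' \, \bar{a} \, Y_C' \, \bar{b} \, \bar{b} \cdots,
\]
where $X_B' \bar{b}$ (resp.\ $Y_C' \bar{b}$) is the block of edges above the vertices of $B$ (resp.\ $C$), the final $\bar{b}$'s being the edges above the roots of $B$ and $C$, and the isolated $\bar{a}$ (resp.\ $\bar{b}$) is the edge above $t$, which is a left child of $s$ in $T$ and a right child of $s$ in $T'$. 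Hence $u(T)\to u(T')$ is exactly the switch of the east step $E = \bar{b}$ (above the root of $B$) with the immediately following subpath $D_{[p,p']} = \bar{a}\, Y_C' \, \bar{b}$.

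To verify that this is the covering relation of $\mathrm{Tam}(v)$, I would apply Lemma~\ref{lem:seqhorizarbres}: $\mathrm{horiz}_v(p)$ equals the right height of the vertex appearing just after $p$ in post order. The point $p$ lies between the edges above the root of $B$ and above $t$, so $\mathrm{horiz}_v(p)$ equals the right height of $t$, which equals that of $s$ (as $t$ is a left child of $s$); similarly $\mathrm{horiz}_v(p')$ equals the right height of $s$. Every vertex of $B$ or $C$ in between has strictly larger right height, so $p'$ is the \emph{first} point after $p$ with the same horizontal distance, confirming the covering condition; $p$ is preceded by $\bar b$ and followed by $\bar a$ by construction. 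Conversely, given any covering relation in $\mathrm{Tam}(v)$ at a point $p$ on $u$, the east step before $p$ is the edge above a right child $x = x_i$, and the north step after $p$ is the edge above a left child $y = x_{i+1}$. Since in post order a right child is immediately followed by its parent, $y$ must be the parent of $x$; setting $t = y$, $s = $ the parent of $t$, and $B = $ the right subtree of $t$ gives a valid rotation whose induced path transformation matches the given covering.

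The main difficulty will be the careful bookkeeping of the block decomposition of $u(T)$ coming from the post-order traversal, especially the edge cases when $A$ or $C$ is empty or when $|B| = 1$ (so $X_B'$ is empty); all are handled uniformly by the formulas above but require attention. Apart from this, the argument is a purely local computation resting on Lemma~\ref{lem:seqhorizarbres} and the explicit description of rotations.
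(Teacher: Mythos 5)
Your proof is correct and follows essentially the same route as the paper's: both identify $\mathrm{I}(v)$ with the paths above $v$ via Proposition~\ref{Prop_uv_bijection}, match valid-rotation sites with east-step-before-north-step corners of $u(T)$, and use Lemma~\ref{lem:seqhorizarbres} together with the fact that the vertices of $C$ have right height strictly greater than $h_s=h_t$ to see that $p'$ is the first return to the horizontal distance of $p$ (the paper phrases this by comparing the post-order right-height sequences $(L,S_A,S_B,h_t,S_C,h_s,R)$ and $(L,S_A,S_B,S_C+\bar 1,h_s+1,h_s,R)$ rather than the words themselves). The only slip is immaterial: the points strictly between $p$ and $p'$ correspond to the vertices of $C$ alone, not of $B$, but the inequality you need holds for those.
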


\begin{proof}
Let $\overline{T}$ be a complete binary tree with interior canopy $v$. We will show that the covering relations of the two posets are the same (see Figure \ref{fig:coveringrelationssame} for an example). First note that a valid right rotation can be applied exactly in the following situation, and it is easy to show that this situation corresponds bijectively, under the pair $(u(T),v(T))$, to the case where you have an east step preceding a north step in the path $u(T)$, which are precisely the places where the covering relations are defined in Tam$(v)$. Let $s$ and $t$ be vertices of the complete binary tree $\overline{T}$ such that $t$ is the left child of $s$. In $\overline{T}$, let $A$ be the left subtree of $t$, $B$ the right subtree of $t$ that contains more than once vertex, and $C$ the right subtree of $s$. Apply a valid right rotation to $\overline{T}$ to obtain the complete binary tree $\overline{T'}$  (see Figure \ref{fig:coveringrelationssame}). The sequence of right heights of the vertices of $T$ in post order traversal is of the form $(L, S_{A}, S_{B}, h_{t}, S_{C}, h_{s}, R)$, where $S_{A}, S_{B}, h_{t}, S_{C}$,and $h_{s}$ are the sequences of right heights of the vertices in $T$ in post order traversal in $A, B, t, C$ and $s$, respectively, and $L$ and $R$ are the sequences of right heights of all the vertices in $T$ in post order traversal that precede and succeed all the previous vertices respectively. It is clear that $h_{s}=h_{t}$. The sequence of right heights of the vertices in post order traversal of $T'$ is given by $(L, S_{A}, S_{B}, S_{C} + \bar{1}, h_{s}+1, h_{s}, R)$, where $S_{C} + \bar{1}$ corresponds\footnote{Note that $C$ might not contain a vertex of $T$, and therefore this sequence might be empty.} to adding 1 to all the values in $S_{C}$. Since all the values in $S_{C}$, if any, are greater than $h_{s}$, the previous lemma and the definition of the covering relations in Tam$(v)$ imply that the covering relations in I$(v)$ and Tam$(v)$ are the same. 
\end{proof}







We can now prove our main theorems of section \ref{sectmainresults}. 

\begin{proof}[Proof of Theorem \ref{thmlattice}]
An interval of a lattice is always a lattice, therefore I$(v)$ is a lattice by Proposition \ref{prop_Iv_interval}, and so is Tam$(v)$ by Proposition \ref{prop_isom_IV_Tv}.
\end{proof}

\begin{proof}[Proof of Theorem \ref{thmisomdual}]
After applying a reflection to a binary tree with canopy $v$, it is easy to see using the second definition of canopy, that the canopy of this tree (obtained by reflection) is precisely $\overleftarrow{v}$. It is clear from the constructions of Tam$(v)$ and Tam$({\overleftarrow{v}})$ that these lattices are isomorphic up to duality. 
\end{proof}

\begin{proof}[Proof of Theorem \ref{thmpartitiontamlat}]
We partition the complete binary trees with $n$ interior vertices into sets of trees with the same interior canopy. We then apply Proprosition \ref{prop_isom_IV_Tv} to each set of trees.
\end{proof}

\begin{figure}[H]
  \centering
  \includegraphics[width=7.4cm]{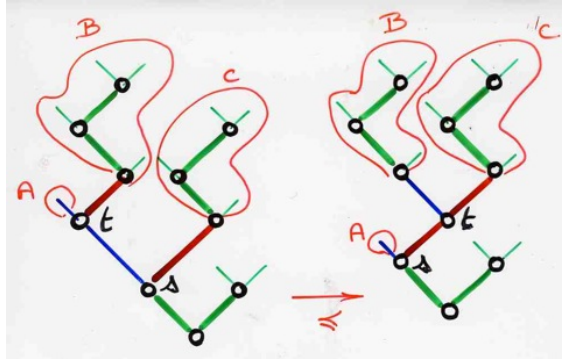}
  \caption{(with Fig 6) The covering relation in Tam$(v)$ and the corresponding rotation in $\overline{T}$ (ordinary Tamari with complete binary trees).}
  \label{fig:coveringrelationssame}
\end{figure}


\section{Connections with the diagonal coinvariant spaces and perspectives}\label{seccoinvariantperspec}

Our work has been influenced by the combinatorics of the ``generalized'' diagonal coinvariant spaces of the symmetric group. We give a brief description of the subject here, and refer the reader to \cite{haglund-book,Bergeron-book-algcomb,bergeron-preville,bousquet-fusy-preville,MbmGcLfpr} for more details. 

Let $X=(x_{i,j})_{^{1 \leq i \leq k}_{1 \leq j \leq n}}$ be a matrix of variables. A permutation $\sigma$ of the symmetric group $\Sn_n$ permutes the variables columnwise by $\sigma(X) = (x_{i,\sigma (j)})_{^{1 \leq i \leq k}_{1 \leq j \leq n}}$, i.e. $\sigma(x_{i,j}) = x_{i,\sigma(j)}$. This action can be directly extended to all the polynomials in $\mathbb{C}[X]$. All the variables in a same row of $X$ are said to be contained in the same set of variables. Since $X$ contains $k$ rows, there are $k$ sets of variables. Let $\mathcal J$ be the ideal generated by constant free invariant polynomials under this action. The diagonal coinvariant spaces of $\Sn_n$ are defined as $\mathcal{DR}_{k,n} := \mathbb{C}[X] \big{/} \mathcal{J} $. They can be generalized using an additional parameter $m$ that is a positive integer. The higher diagonal coinvariant spaces of the symmetric group are defined as $\mathcal{DR}_{k,n}^{m} := {\eps}^{m-1}  \otimes  \mathcal{A}^{m-1} \big{/} \mathcal{J} \mathcal{A}^{m-1}$, where $\epsilon$ is the sign representation and $\mathcal{A}$ be the ideal generated by alternants, i.e. polynomials $f(X)$ such that  $\sigma f(X) = \eps(\sigma) f(X)$, $\forall \sigma \in \Sn_n$. Note that $\mathcal{DR}_{k,n}=\mathcal{DR}_{k,n}^{1}$. The $\mathcal{DR}_{k,n}^{m}$ are representations of $\Sn_n$ because the action given above can be applied to the quotient space $\mathcal{DR}_{k,n}^{m}$. They are graded with respect to the degree of each set of variables. We denote the subspace of alternants of $\mathcal{DR}_{k,n}^{m}$ by ${\mathcal{DR}_{k,n}^{m}}^{\eps}$.

In the case $k=1$, they are classical \cite{Steinberg1964} and the dimensions of ${\mathcal{DR}_{1,n}^{m}}^{\eps}$ and ${\mathcal{DR}_{1,n}^{m}}$ are given by $1$ and $n!$, respectively.

In the case $k=2$, they were first defined and studied by Garsia and Haiman because of their connections with the Macdonald polynomials. It was proven by Haiman \cite{HaimanPreu} that the dimensions of ${\mathcal{DR}_{2,n}^{m}}^{\eps}$ and ${\mathcal{DR}_{2,n}^{m}}$ are given by $\frac{1}{(m+1)n+1} \binom{(m+1)n+1}{mn}$ and $(mn+1)^{n-1}$, respectively. The first number corresponds to the number of $m$-ballot paths of height $n$ and the second one to the number of $m$-parking functions of height $n$. The $m$-parking functions of height $n$ are simply the $m$-ballot paths labelled on the north steps, with the labels in the set $\{ 1,2,...,n \}$ such that consecutive north steps are labelled increasingly. The spaces $\mathcal{DR}_{2,n}^{m}$ have been studied by many researchers for more than 20 years. Despite that, there are still some important unresolved conjectures left in the field. We mention only one here. The $m$-shuffle conjecture \cite{MR2115257} states that the graded Frobenius series of $\mathcal{DR}_{2,n}^{m}$ is equal to a $q,t$-weighted sum on $m$-parking functions, which involves the combinatorial statistics $area$ and $dinv$, and some quasi-symmetric functions associated to these $m$-parking functions. 

For the case $k=3$, Haiman \cite{HaiConj} conjectured in the 1990's that the dimensions of ${\mathcal{DR}_{3,n}}^{\eps}$ and ${\mathcal{DR}_{3,n}}$ are equal to $\frac{2}{n(n+1)} \binom{4n+1}{n-1}$ and $2^n (n+1)^{n-2}$ respectively. 

Independently of all this story, Chapoton \cite{ch06} proved in 2006 that the number of intervals in the Tamari lattice based on complete binary trees with $n$ interval vertices is given by $\frac{2}{n(n+1)} \binom{4n+1}{n-1}$. In 2008, the $m$-Tamari lattice was introduced in \cite{bergeron-preville} and it was conjectured that the number of intervals and labelled intervals in the $m$-Tamari lattice are given by $\frac{m+1}{n(mn+1)} \binom{(m+1)^2 n+m}{n-1}$ and $(m+1)^n (mn+1)^{n-2}$, respectively. A labelled interval in the $m$-Tamari lattice is simply an interval where the top path is decorated as a $m$-parking function.  Refinements of these two results were proven in \cite{bousquet-fusy-preville,MbmGcLfpr}. 

The duality that is proved in this article shows that the number of intervals in Tam$((N^m E)^n)$ is the same as in the $m$-Tamari lattice Tam$({(NE^m)^n})$. Using refinements and calculations, it seems that the number of labelled intervals in Tam$({(NE^m)^n})$ is equal to the number of labelled intervals on east steps in Tam$({(N^m E)^n})$, where the labelled intervals on east steps are defined by assigning the labels in the set $\{ 1,2,...,n \}$ on east steps of the upper path, and such that the labels on consecutive east steps are increasing. Note that for $m=1$, this is easy to prove since you can obtain without difficulty the same functional equations for both cases from recurrences. But we have not been able to do so in the case $m>1$. It would be interesting to see if the ideas presented in \cite{ChaPonsarX} could help prove this equality.

More recently, some researchers (see \cite{Hikita,ArmstRhoaWilli,armstrong-website,BeGaLeXicompo14}) have extended the combinatorics of the $\mathcal{DR}_{2,n}^{m}$ by considering paths and parking functions above the line with endpoints (0,0) and $(b,a)$, where $a,b$ are arbitrary positive integers\footnote{Note that the paths above the line with endpoints (0,0) and $(mn,n)$ are the same as the paths above the line with endpoints (0,0) and $(mn+1,n)$, this is why we use the term extension.}. They defined the combinatorial statistics area and dinv on these objects. So this rational Catalan combinatorics can be seen as the combinatorics of some possible generalizations of the spaces $\mathcal{DR}_{2,n}^{m}$. Even though these spaces have not yet been shown to exist, some preliminary calculations \cite{BeGaLeXicompo14} are hopeful that they do. One might try now to define a dinv statistic on paths and parking functions above an arbitrary path consisting of east and north steps, even if it is unknown to be possible. 

It remains to be seen if our lattices Tam$(v)$, for arbitrary paths $v$, will give a combinatorial setup for the not yet defined generalizations\footnote{As it is explained in the previous paragraph, at the moment it is not known if these generalizations exist.}  of the spaces $\mathcal{DR}_{3,n}^{m}$. It will be interesting to verify this as the theory of the ``generalized'' diagonal coinvariant spaces develops.



In this article, we showed that the the lattices Tam$(v)$ based on paths and the lattices I$(v)$ based on trees with the same canopy are equivalent (i.e. isomorphic). We would like to mention that we know a third combinatorial model that is isomorphic to these two. We give only a short description here. In \cite{bousquet-fusy-preville}, an anonymous referee shows a combinatorial model for the $m$-Tamari lattice based on $(m+1)$-ary trees. The same idea can be used to define a combinatorial model isomorphic to the Tam$(v)$'s. A planted rooted tree is a rooted tree such that the children of any vertex are totally ordered. Let $T$ be a planted rooted tree. We define the prefix order sequence of $T$ to be the sequence of degrees of the internal vertices of $T$ in prefix order. Consider the set of planted rooted trees that have the same prefix order sequence. To define the covering relation on them, one first chooses a leaf $l$ that is followed (in prefix order) by an internal vertex $s$, of degree $k$. Then by denoting by $T_1, T_2, ..., T_k$ the $k$ subtrees attached to $s$, from left to right, we insert $s$ with its first $k-1$ subtrees in place of the leaf $l$ and $l$ becomes the rightmost child of $s$. The rightmost subtree of $s$, $T_k$, finally takes the former place of $s$. This combinatorial model (a lattice) is equivalent to the Tam$(v)$'s.
It will be interesting to see if there are other combinatorial models equivalent to the Tam$(v)$'s.

We finish this article by mentioning that in a forthcoming paper \cite{Lfprcaninterv14}, it will be shown that the total number of intervals in the lattices  Tam$(v)$,  for all the paths  $v$  of length $n$, is given by $ \frac{2 (3n+3)!} {(n+2)!(2n+3)!}$, which is the same as the number of rooted non-separable planar maps with $n+2$ edges. 

\bigskip
\noindent{\bf Acknowledgements.} 
We are very grateful to Luc Lapointe for making this collaboration possible by making the two authors meet on several occasions, for many fruitful discussions, and for his constant support during the entire process of this work. LFPR would like to thank François Bergeron for many fruitful discussions, in particular for providing the first author a conjectural formula that was one of his motivations for this work, and to Ruben Escobar and Miles Jones for multiple corrections. XV is grateful to Kurusch, Vincent Pilaud and Juanjo Ru\'e for inviting him to the workshop in Madrid (27-29 november 2013)  "Recent trends in algebraic and geometric combinatorics" to present part of this work, and to Mireille Bousquet-M\'elou, Michael Drmota, Christian Krattenthaler and Marc Noy for inviting him at the Oberwolfach workshop (3-7 march 2014) "Enumerative Combinatorics" to present the work described in this paper.

\bibliographystyle{plain}
\bibliography{LFPR_XV_extensionTamarilattice}

\spacebreak

\end{document}